 \def\E{{ \mathsf E}} 
 \def\E{{ \mathsf E}} 
\newtheorem{theorem}{Theorem}
\newtheorem{proposition}[theorem]{Proposition}
\newtheorem{lem}[theorem]{Lemma}
\theoremstyle{definition}
\newtheorem{defi}[theorem]{Definition}
\theoremstyle{remark}
\newtheorem{remark}[theorem]{Remark}
\DeclareMathOperator{\Argmax}{Argmax}
\begin{document}
\thanks{This research was supported by grant ANR-10-BLAN 0112 (France)}
\title{A zero-sum stochastic game with compact action sets and no
asymptotic value}
\author{Guillaume Vigeral}
\address{Universit\'e Paris-Dauphine, CEREMADE, Place du Mar{\'e}chal De Lattre de Tassigny. 75775 Paris cedex 16, France vigeral@ceremade.dauphine.fr}

%

%
%

\date{}

%


\begin{abstract}
We give an example of a zero-sum stochastic game with four states, compact action sets for each player, and continuous payoff and transition functions, such that the discounted value does not converge as the discount factor tends to 0, and the value of the $n-$stage game does not converge as $n$ goes to infinity.
\end{abstract}

\maketitle

\section{Introduction}

Two person zero-sum stochastic games have been widely studied since Shapley introduced them in \cite{Sh}. They model interactions repeated in discrete time between two players with opposite interests. The state of nature evolves as a function of the current state and of the actions chosen by each player, and determines which zero-sum game the players are facing at each time period. Hence, the actions of the players have an influence both on the payoff today and on the law of the state of nature tomorrow.

There are several ways of evaluating the payoff of such a stochastic game. For any integer $n\in\mathds{N}$, one defines the $n-$stage game for which Player 1 (resp. Player 2) maximizes (resp. minimizes) his average gain on the first $n$ stages. For any $\lambda\in]0,1]$, one defines the $\lambda$-discounted game\footnote{In which the future has weight $1-\lambda$ ; we warn the reader that in the literature the opposite convention $\delta=1-\lambda$ is often used.} for which Player 1 (resp. Player 2) maximizes (resp. minimizes) his $\lambda-$discounted payoff. Some of the main questions in the theory of zero-sum stochastic games are related to the asymptotic behavior of the values of these games as players grow more and more patient:
\begin{itemize}
\item Does the value of the $n-$stage game converge as $n$ tends to infinity ?
\item Does the value of the $\lambda-$discounted game converge as $\lambda$ tends to 0 ?
\item Are the two limits equal ?
\end{itemize}
When the answers to these three questions are positive, the game is said to have an asymptotic value. A nice explanation of why the asymptotic value should exist for games regular enough is the following \cite{So2}. An $n-$stage game can be seen as a game played in the time interval $[0,1]$, where the payoff is $\int_0^1 g_t$, and in which the players only moves at time $\frac{k}{n}$. Similarly, in a $\lambda$-discounted game, they only play at time $\lambda$, $\lambda+\lambda(1-\lambda)$, and so on. As $n$ goes to infinity and $\lambda$ goes to 0, these games can thus be viewed as some time discretizations of an hypothetic game played in continuous time on $[0,1]$, and thus the values should converge to the value of this "limit game".

Stochastic games were first studied in the case of a finite number of states and when each player has only finitely many actions. Existence and characterization of the values for a fixed $\lambda$ or $n$ is due to Shapley \cite{Sh} and relies on von Neumann's minmax theorem \cite{voNe} as well as Banach's fixed point theorem. In this framework, asymptotic value was established first for recursive \cite{Ev} and absorbing games \cite{Koh}, then in general (see \cite{BeKo,BeKo2} for the original proof using Tarski-Seidenberg's Theorem, or \cite{Ol} for a recent proof involving linear programming).

Since minmax theorems also hold true for games with compact action sets and continuous payoffs \cite{Si}, the values exist \cite{MaPa} for fixed $n$ or $\lambda$
for games with finitely many states, compact action sets for each player, and continuous payoff and transitions. In this framework, asymptotic value was established for recursive \cite{So4,SoVi} and absorbing \cite{RoSo,SoVi} games
, and was conjectured to hold true in general \cite{So2}. 

Let us mention that the existence of an asymptotic value was established in the framework of Markov decision processes and dynamic programming \cite{Ba1,Ba2,Ba3,DyYu,Ren3} ; for games with incomplete information \cite{AuMa,CLS,MeZa,RoSo} ; as well as for some stochastic games with incomplete information \cite{Ren, Ren2,Ro,RoVi}.

In this paper we answer by the negative to the conjecture in \cite{So2} by constructing a game with four states, compact action sets, and continuous payoff and transitions, whose values do not converge as $n$ tends to infinity or $\lambda$ tends to 0. Surprisingly, it is possible to construct a compact game in which Player $1$ can guarantee a payoff of $1$ in any $10^{2k}-$stage game, while Player 2 can guarantee a payoff of $-1$ in any $10^{2k+1}-$stage game. The idea of the counterexample is to construct transition functions that are continuous but oscillate infinitely often.  These oscillations of the transition functions yield oscillations - and thus divergence - of the values.

The paper is structured as follows. The first section gives the model of compact stochastic games and define discounted and finitely repeated values. The next section is the main one in which some counterexamples are constructed: first we give some examples in which the discounted value diverges, then we show that the value of the $n-$stage game diverges as well for some of these examples. The last section gives some concluding remarks as well as some open questions.

\section{Model}
A \emph{compact} two person zero-sum stochastic game $\Gamma$ is defined by a finite state space  $\Omega$,  compact metric action spaces  $I$ and  $J$ for Player 1 and 2  (we will denote the mixed actions sets of Player 1 and Player 2  $X =\Delta (I) $ and  $Y= \Delta (J)$, respectively\footnote{For a compact metric space $K$, $\Delta (K)$ denotes the set of Borel probabilities on $K$, endowed with the weak-$\star$ topology.} ), a jointly continuous real bounded payoff $g$ on $I \times J \times\Omega $ and a jointly continuous  transition  $\rho$ from $I \times J \times\Omega $ to $\Delta (\Omega)$. When $I$ and $J$ are finite the game is said to be \emph{finite}.

The game is played in discrete time. The initial state $\omega_1\in\Omega$ is known by both players. At stage $t$, given the state $\omega_t$, the players independently choose mixed moves $x_t \in X$ and $y_t \in Y$. The stage actions $i_t$ and $j_t$ are drawn according to $x_t$ and $y_t$ respectively. The stage payoff is $g_t = g( i_t, j_t, \omega_t)$, the new state  $\omega_{t+1}$  is selected according to $ \rho( i_t, j_t, \omega_t)$, and $(i_t,j_t,\omega_{t+1})$ is announced to the players.

We are mainly interested in discounted games: for any discount factor $\lambda \in ]0, 1]$, the $\lambda$-discounted game with initial state $\omega_1$ is denoted $\Gamma_\lambda(\omega_1)$; in this game Player 1 (resp. Player 2) maximizes (resp. minimizes) the expectation of $\sum_{t=1}^{\infty} \lambda ( 1 - \lambda)^{t-1}g_t$. The game $\Gamma_\lambda(\omega_1)$ has a value denoted by $v_\lambda(\omega_1)$, and one proves (see \cite{Sh} in the finite case and \cite{MaPa} in the compact one) that the function $v_\lambda:\Omega\to\mathds{R}$ is the only fixed point of the following equation:

\begin{eqnarray}
f(\omega)&=&\min_Y \max_X \left\{\lambda g(x,y,\omega)+ (1-\lambda)\E_{\rho(x,y,\omega)} f(\cdot)\right\}\label{eqminmax}\\
&=&\max_X \min_Y  \left\{\lambda g(x,y,\omega)+ (1-\lambda)\E_{\rho(x,y,\omega)} f(\cdot)\right\},\label{eqmaxmin}
\end{eqnarray}
where $g$ and $\rho$ are bilinearly extended to $X\times Y$, and the permutation of $\min$ and $\max$ is possible according to Sion's theorem\cite{Si}.

The following lemma gives an interesting sufficient condition for a function to be equal to $v_\lambda$.
\begin{defi}
A mixed action $x\in X$ (resp. $y\in Y$) is equalizing for the function $f$ in $\Gamma_\lambda(\omega)$ if for every $y\in Y$ (resp. every $x\in X$),
\[
f(\omega)=\lambda g(x,y,\omega)+ (1-\lambda)\E_{\rho(x,y,\omega)} f(\cdot).
\]
\end{defi}

\begin{lem}\label{equalizinglemma}
Let $\lambda\in]0,1]$ and assume that there exists a function $f$ such that for any state $\omega$, both players have an equalizing action in $\Gamma_\lambda(\omega)$. Then $f=v_\lambda$.
\end{lem}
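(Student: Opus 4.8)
The plan is to reduce the statement to the uniqueness of the fixed point of the Shapley operator. Since the excerpt recalls that $v_\lambda$ is the \emph{only} function satisfying \eqref{eqminmax}--\eqref{eqmaxmin}, it suffices to check that the given $f$ is itself a fixed point of that operator, i.e. that for every state $\omega$,
\[
f(\omega)=\min_Y\max_X\left\{\lambda g(x,y,\omega)+(1-\lambda)\E_{\rho(x,y,\omega)}f(\cdot)\right\}=\max_X\min_Y\left\{\lambda g(x,y,\omega)+(1-\lambda)\E_{\rho(x,y,\omega)}f(\cdot)\right\}.
\]
Once this is established, the conclusion $f=v_\lambda$ is immediate.

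First I would fix a state $\omega$ and exploit the two equalizing actions separately. Let $x^\ast\in X$ be equalizing for Player 1 and $y^\ast\in Y$ equalizing for Player 2. By definition the bracket $\lambda g(x^\ast,y,\omega)+(1-\lambda)\E_{\rho(x^\ast,y,\omega)}f(\cdot)$ is constant in $y$, equal to $f(\omega)$; hence its minimum over $Y$ equals $f(\omega)$, which gives the lower bound
\[
\max_X\min_Y\left\{\lambda g(x,y,\omega)+(1-\lambda)\E_{\rho(x,y,\omega)}f(\cdot)\right\}\ge f(\omega).
\]
Symmetrically, the equalizing action $y^\ast$ makes the bracket constant in $x$ and equal to $f(\omega)$, so its maximum over $X$ is $f(\omega)$, yielding the upper bound
\[
\min_Y\max_X\left\{\lambda g(x,y,\omega)+(1-\lambda)\E_{\rho(x,y,\omega)}f(\cdot)\right\}\le f(\omega).
\]

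Finally I would invoke Sion's minmax theorem to close the gap. The two displayed extrema coincide, so this common value is simultaneously $\ge f(\omega)$ and $\le f(\omega)$, hence equal to $f(\omega)$. As $\omega$ was arbitrary, $f$ solves the Shapley equation and therefore $f=v_\lambda$. The only point requiring care — and the reason one cannot conclude in a single line — is that the equalizing actions $x^\ast$ and $y^\ast$ of the two players need not form a saddle point of the one-shot bracket; each yields only one of the two inequalities, and it is precisely the interchangeability of $\min_Y$ and $\max_X$ that reconciles them.
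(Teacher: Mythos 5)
Your proposal is correct and takes essentially the same route as the paper: the paper's one-line proof simply observes that such an $f$ is a fixed point of the Shapley equations (\ref{eqminmax})--(\ref{eqmaxmin}) and invokes uniqueness of that fixed point, which is exactly what you verify in detail. One small simplification: you do not actually need Sion's theorem to close the gap, since the trivial weak-duality inequality $\max_X\min_Y\{\cdot\}\le\min_Y\max_X\{\cdot\}$, combined with your two bounds, already squeezes both quantities to $f(\omega)$.
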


\begin{proof}
Such an $f$ is a fixed point of (\ref{eqminmax}) and (\ref{eqmaxmin}), and $v_\lambda$ is the unique fixed point of these equations.
\end{proof}

The finitely repeated stochastic game with horizon $n$ and initial state $\omega_1$ is the game in which Player 1 (resp. Player 2) maximizes (resp. minimizes) the expectation of $\sum_{t=1}^{n} \frac{1}{n} g_t$. Its value is denoted by $v_n(\omega_1)$.

A compact stochastic game is said to have an \emph{asymptotic value} if $v_\lambda$ and $v_n$ converge (as $\lambda$ goes to 0 and $n$ to infinity respectively) and if the limits are the same.

In the next section we construct a compact stochastic game such that neither $v_\lambda$ nor $v_n$ converges. Hence there exists a compact stochastic game with no asymptotic value.
\section{Main section}

The main result of the paper is:

\begin{theorem}\label{maintheorem}
There exists a stochastic game with 4 states, in which the action sets are real intervals, the payoff and transition functions are continuous, and for which neither $v_\lambda$ nor $v_n$ does converge.
\end{theorem}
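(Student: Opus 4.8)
The plan is to construct the game explicitly and then compute, or tightly bound, its values, exploiting the equalizing Lemma~\ref{equalizinglemma} in the discounted case. For the architecture I would take four states $\omega^+,\omega^-,\omega_1,\omega_2$. I make $\omega^+$ and $\omega^-$ absorbing with constant payoffs $+1$ and $-1$ (their transition is the Dirac mass on themselves, regardless of the actions played), so they act as terminal rewards. The two remaining states are \emph{active}: there each player chooses an action in a real interval, and these actions govern, continuously, the probabilities of being absorbed into $\omega^+$ or $\omega^-$, of staying put, and of switching between $\omega_1$ and $\omega_2$. The stage payoff in the active states can be taken to be $0$, so the total payoff is determined solely by which absorbing state is eventually reached and when. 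The design principle is that the transition functions, though jointly continuous, oscillate infinitely often as an action tends to a boundary point of its interval, and that these oscillations are calibrated so that the dynamic-programming recursion exhibits a multiplicative self-similarity of factor $10$.

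For the discounted values I would guess the qualitative shape of $v_\lambda$ on the active states and then look for equalizing actions for both players. Concretely, for a candidate function $f$ one writes the equalizing equations $f(\omega)=\lambda g(x,y,\omega)+(1-\lambda)\E_{\rho(x,y,\omega)}f(\cdot)$ and solves for the actions that make the right-hand side independent of the opponent's choice. Because of the oscillating transitions, the solvability of these equations, and hence the value, depends on $\lambda$ through a term behaving like a $\log_{10}$-periodic function of $1/\lambda$. Once equalizing actions are exhibited for every state, Lemma~\ref{equalizinglemma} certifies that the candidate is exactly $v_\lambda$, which bypasses any direct optimization. One then reads off that $v_\lambda(\omega_1)$ oscillates: along $\lambda=10^{-2k}$ it is close to $+1$, and along $\lambda=10^{-(2k+1)}$ it is close to $-1$, so $v_\lambda$ does not converge as $\lambda\to 0$.

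For the finite-horizon values one cannot invoke a Tauberian theorem, since such a result would, if anything, force convergence; the analysis must be carried out directly on the $n$-stage Shapley recursion $v_n=T(v_{n-1})$ with $v_0\equiv 0$, where $T$ is the one-stage operator with weights $1/n$. I would prove by induction, tracking the pair $(v_n(\omega_1),v_n(\omega_2))$, that the same self-similarity makes the value $\log_{10}$-periodic in $n$: Player $1$ can guarantee a payoff close to $1$ in every $10^{2k}$-stage game, by playing in the active states the strategy that drives the game toward $\omega^+$ on the relevant time scale, while Player $2$ symmetrically guarantees close to $-1$ in every $10^{2k+1}$-stage game. These guarantees, holding for arbitrarily large $k$, show that $v_n$ diverges.

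The main obstacle is to calibrate a single pair of continuous transition functions so that the oscillation scale is locked to the natural time scale of the game, of order $1/\lambda$ stages for the discounted game and $n$ stages for the finite one, with a clean factor-$10$ self-similarity, while keeping the amplitude of the value's oscillation bounded away from $0$ in the limit. Continuity forces the oscillations to be tamed near the accumulation action, yet they must not be damped so strongly that the induced oscillation of the value decays to zero. Verifying the equalizing equations in the discounted game, and especially pushing the inductive estimates through the finite-horizon recursion where no fixed-point shortcut is available, is where the genuine work lies.
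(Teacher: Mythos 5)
There are two genuine gaps, one fatal to the architecture and one fatal to the finite-horizon part.

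First, your design makes the stage payoff in the two active states equal to $0$, ``so the total payoff is determined solely by which absorbing state is eventually reached and when.'' That is precisely the definition of a \emph{recursive} game, and compact recursive games are known to have an asymptotic value \cite{So4} (this is stated explicitly in the paper as the reason the construction cannot be recursive). So no calibration of oscillating transitions, however clever, can produce a counterexample within your architecture: the payoffs in the two nonabsorbing states must differ, and the paper takes them to be $+1$ and $-1$. Relatedly, your target oscillation of $v_\lambda(\omega_1)$ between values close to $+1$ and close to $-1$ is far stronger than anything the paper achieves (its values oscillate with amplitude $1/16$ around $0$), and nothing in your sketch supports feasibility of such a large amplitude. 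More fundamentally, your discounted-game plan --- guess $v_\lambda$, then search for equalizing actions of a transition function with ``factor-$10$ self-similarity'' --- is exactly the step where existence is in doubt, and you give no mechanism for it. The paper's key device is to \emph{invert} the problem: prescribe the family $v_\lambda$ through two functions $s,d$, treat the transition probabilities as unknowns, solve the equalizing equations in closed form for $p_+^*,p_+,p_-^*,p_-$ as functions of the two actions, and then verify (Lemma \ref{lemmedefp} and the four continuity lemmas behind Proposition \ref{mainprop}) that these formulas are continuous and lie in $\left[0,\tfrac12\right]$. Without some such explicit solvability argument, your Lemma \ref{equalizinglemma} step never gets off the ground.

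Second, your treatment of $v_n$ rests on a misconception. You write that a Tauberian-type theorem ``would, if anything, force convergence'' and therefore propose a direct induction on the $n$-stage Shapley recursion --- which you do not carry out, and which is the hardest possible route (tracking oscillating, action-dependent transitions through $n$ compositions of the value operator). The paper does the opposite: it uses a comparison result of Neyman \cite{Ne} (Lemma \ref{lemvlambdavn}) asserting that if $v_\lambda$ is $\mathcal{C}^1$ with $\frac{\mathrm{d}v_\lambda}{\mathrm{d}\lambda}=o(1/\lambda)$, then $v_n$ and $v_\lambda$ have the \emph{same accumulation points} --- so divergence, not just convergence, transfers. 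The whole finite-horizon argument then reduces to choosing $s$ oscillating slowly enough, namely $s(x)=\frac{\sin\ln(-\ln x)}{16}$ rather than $\frac{\sin\ln x}{16}$, so that $x s'(x)\to 0$ and the lemma applies. Your proposal leaves the $n$-stage analysis as an acknowledged open ``obstacle,'' which means the statement about $v_n$ is not proved.
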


The remainder of this section is dedicated to the proof of this theorem.
\subsection{The intuition behind the construction}
Before going to the explicit construction of a counterexample we give some intuition about it and exhibit a subclass of compact games that is likely to contain a counterexample (if such a counterexample exists). We would like for this class to be as small as possible, in order to be more likely to find a precise counterexample within.

First, recall that a compact absorbing game has an asymptotic value \cite{RoSo}, so in any counterexample there must be at least two nonabsorbing states, and we consider the simplest case in which there are exactly two. Since in any compact stochastic game $v_\lambda(\cdot)$ converge for at least two initial starting states\cite{KoNe,Ne}, there must be at least four states. To make things simpler we may as well assume that the states for which $v_\lambda(\cdot)$ converges are absorbing, with different payoffs (else our game would be equivalent to a three states game), say $-1$ and $1$.

We also remark that, in compact games, it is the transitions functions, rather than the payoff functions, that are most likely be a source of oscillations of the values $v_\lambda$. A small variation of $g$ induces a small variation of $v_\lambda$; it is not the case for small variations of $\rho$. So, once again to simplify as much as possible, we assume that the payoff does not depend on the actions played by the player. Since compact recursive games have an asymptotic value \cite{So4}, the payoff in the two nonabsorbing states must be different, say $1$ and $-1$. 

It remains to understand which transition functions are likely to be problematic. First of all, we argue that under optimal play in $\Gamma_\lambda$, the absorption probability in each stage should be of the order of $\lambda$. Indeed, if it was much smaller than $\lambda$, then absorption would happen when the game has almost ended (that is, when the remaining part of the discounted payoff is negligible), so the absorbing states would be irrelevant and we might as well remove them. This would give us less than four states and thus an asymptotic value. On the other hand, if it was much greater than $\lambda$, absorption would occur almost immediately and the same play would give the same payoff for all small $\lambda$.

Similarly, we claim that the order of transition from one nonabsorbing state to the other should be on the order of $\lambda^\alpha$, for some $\alpha$ in $]0,1[$: if smaller it would almost never happen before absorption ; and if higher it would happen so often that the two states would be essentially the same, leaving us with a three states game and an asymptotic value.

Before considering compact games, we are first going to briefly study some finite games having all these features, to understand why $v_\lambda$ converge in the finite case and might not in the compact one. In fact, it turns out that such a game was already studied\footnote{Interestingly, this game was, at the time, a potential example of a finite game with no uniform value. In their example the payoff does depend on the chosen actions but this is irrelevant as it won't change the asymptotics of the optimal play.} by Bewley and Kohlberg (\cite{BeKo3} page 120). We make the following slight generalization\footnote{Their example is the particular case of $p^*_+=p^*_-=1$.}: consider the following family of finite stochastic games, where $p^*_+$ and $p^*_-$ are two parameters in $[0,1]$.
\begin{itemize}
\item There are two nonabsorbing states $\omega_+$ and $\omega_-$, and two absorbing states $1^*$ and $-1^*$.
\item Both players have two pures actions, Stay and Quit.
\item The payoff in each state is independent of the actions: it is $1$ in $\omega_+$ and $1^*$ ; $-1$ in $\omega_-$ and $-1^*$.
\item The transitions are given by the following matrices:
\end{itemize}

\vspace{.5cm}
\begin{center}
\begin{tabular}{lr}
\begin{tabular}{r|c|c|}
$\omega_+$&Stay&Quit\\  \hline Stay&$\omega_+$&$\omega_-$\\
\hline
Quit&$\omega_-$&$p_+1^*+(1-p^*_+)\omega_+$ \\
\hline
\end{tabular}
&
\begin{tabular}{r|c|c|}
$\omega_-$&Stay&Quit\\  \hline Stay&$\omega_-$&$\omega_+$\\
\hline
Quit&$\omega_+$&$p_-{-1}^*+(1-p^*_-)\omega_-$ \\
\hline
\end{tabular}
\end{tabular}
\end{center}
\vspace{.3cm}

Calculations show that:
\begin{itemize}
\item $\lim v_\lambda=v$  with $v(\omega_+)=v(\omega_-)=\frac{\sqrt{p^*_+}-\sqrt{p^*_-}}{\sqrt{p^*_+}+\sqrt{p^*_-}}$.
\item Optimal mixed actions in $\Gamma_\lambda$ are given, for  $k\in\{+,-\}$, by $x_\lambda(\omega_k)=y_\lambda(\omega_k)\approx\frac{\sqrt{\lambda}}{\sqrt{p^*_k}}$ as $\lambda$ goes to 0 (we identify a mixed action with the probability assigned to $Q$).
\end{itemize}

Recall that in any one-shot zero-sum game, if an optimal action of a player is completely mixed, any optimal action of the other player is equalizing. Thus, since both $x_\lambda$ and $y_\lambda$ are completely mixed, they are both equalizing in $\Gamma_\lambda$.

Taking the mixed extension of this finite game we get a compact game $\Gamma^c$. The (now pure) action $x_\lambda$ and $y_\lambda$ are optimal in $\Gamma^c_\lambda$. Since we want to discuss the influence of the parameters of the game on the transitions under optimal play, it is convenient to relabel the actions so that the optimal action of a player in $\Gamma_\lambda$ depends only on $\lambda$ and not on $p^*_+$ and $p^*_-$. For any nonabsorbing $\omega$, it can be shown that $x_\lambda(\omega_k)=y_\lambda(\omega_k)$ is decreasing for $\lambda$ small enough. Hence, by some suitable change of variables for the actions of each player in each state we get a compact game such that the stationary strategy $\lambda$ in each state is optimal (and equalizing) for each player. We have thus constructed a compact game such that:

\begin{itemize}
\item There are two nonabsorbing states $\omega_+$ and $\omega_-$, and two absorbing states $1^*$ and $-1^*$.
\item The set of actions of each player is $[0,1]$.
\item In each state, for each player, the pure action $\lambda$ is equalizing in $\Gamma_\lambda$ for $\lambda$ small enough.
\item $\rho(\omega_-|i,j,\omega_+)\approx \frac{\sqrt{i}+\sqrt{j}}{\sqrt{p^*_+}}$ ; $\rho(\omega_+|i,j,\omega_-)\approx \frac{\sqrt{i}+\sqrt{j}}{\sqrt{p^*_-}}$ ; $\rho(1^*|i,j,\omega_+)\approx \frac{\sqrt{i}\sqrt{j}}{p^*_+}$ ; $\rho(-1^*|i,j,\omega_-)\approx \frac{\sqrt{i}\sqrt{j}}{p^*_-}$.
\item $v(\omega_+)=v(\omega_-)=\frac{\sqrt{p^*_+}-\sqrt{p^*_-}}{\sqrt{p^*_+}+\sqrt{p^*_-}}=\frac{1-\sqrt{\frac{p^*_-}{p^*_+}}}{1+\sqrt{\frac{p^*_-}{p^*_+}}}$.
\end{itemize}

While these games are compact games, there are very specific ones since they are (up to a change of variables) mixed extensions of finite games. In particular the transitions functions are linear (up to a change of variables), and this is what entails the convergence of $v_\lambda$. A natural idea is to use the additional freedom in general compact games with interval action sets to construct a similar game such that $\rho(\omega_-|i,j,\omega_+) =\frac{\sqrt{i}+\sqrt{j}}{\sqrt{p^*_+(i,j)}}$ (where $p^*_+$ is no longer a constant but a function of $i$ and $j$), and  similar formulas for the other transitions.
If $\frac{p^*_-}{p^*_+}$ is slowly oscillating between two positive constants(which could not happen, by linearity, in the finite case), we expect that the value $v_\lambda$ also oscillates and thus does not converge. 

Because of this discussion, in the following we will only consider compact games played in pure (and not mixed) actions. This is very convenient since it yields easier computations. Of course in general there is no reason for the values $v_n$ and $v_\lambda$ to exist for a game played in pure actions; however in the following we show how to construct a game for which the values exist but do not converge.
\subsection{A class of compact games}

As the last section motivates us to do, let us consider the class $\mathcal{G}$ of compact stochastic games satisfying the following properties:
\begin{enumerate}
\item There are two nonabsorbing states $\omega_+$ and $\omega_-$, and two absorbing states $1^*$ and $-1^*$.
\item The action set of each player (denoted by $I$ and $J$ respectively) is the interval\footnote{For reasons that will become clear later (division by $1-\lambda$) it is better not to take $I=[0,1]$ but a smaller intervall.}  $\left[0,\frac{1}{16}\right]$.
\item The payoff depends only of the state: for all actions $i$ and $j$, $g(i,j,\omega_+)=g(i,j,1^*)=1$ and  $g(i,j,\omega_-)=g(i,j,-1^*)=-1$.
\item The transition probability $\rho$ is (jointly) continuous, and for all actions $i$ and $j$, $\rho(-1^*|i,j,\omega_+)=\rho(1^*|i,j,\omega_-)=0$.
\item In each nonabsorbing state and for each player, the pure action $\lambda$ is equalizing in the discounted game $\Gamma_\lambda$. That is, for each
$\lambda\in\left]0,\frac{1}{16}\right]$, and for each $i\in I$ and $j\in J$, the discounted value $v_\lambda$ satisfies
\begin{eqnarray}
\label{eq1} v_\lambda(\omega_+)&=&\lambda + (1-\lambda)\left[p^*_+(\lambda,j)+p_+(\lambda,j)v_\lambda(\omega_-)+(1-p^*_+(\lambda,j)-p_+(\lambda,j))v_\lambda(\omega_+)\right]\\
\label{eq2} v_\lambda(\omega_+)&=&\lambda + (1-\lambda)\left[p^*_+(i,\lambda)+p_+(i,\lambda)v_\lambda(\omega_-)+(1-p^*_+(i,\lambda)-p_+(i,\lambda))v_\lambda(\omega_+)\right]\\
\label{eq3} v_\lambda(\omega_-)&=&-\lambda + (1-\lambda)\left[-p^*_-(\lambda,j)+p_-(\lambda,j)v_\lambda(\omega_+)+(1-p^*_-(\lambda,j)-p_-(\lambda,j))v_\lambda(\omega_-)\right]\\
\label{eq4} v_\lambda(\omega_-)&=&-\lambda + (1-\lambda)\left[-p^*_-(i,\lambda)+p_-(i,\lambda)v_\lambda(\omega_+)+(1-p^*_-(i,\lambda)-p_-(i,\lambda))v_\lambda(\omega_-)\right].
\end{eqnarray}
\end{enumerate}

We remark that to define a game in $\mathcal{G}$ one only need to specify the four functions
\begin{eqnarray*}
p^*_+(i,j)&:=&\rho(1^*|i,j,\omega_+)\\
p_+(i,j)&:=&\rho(\omega_-|i,j,\omega_+)\\
p^*_-(i,j)&:=&\rho(-1^*|i,j,\omega_-)\\
p_-(i,j)&:=&\rho(\omega_+|i,j,\omega_-)
\end{eqnarray*}
since necessarily $\rho(\omega_+|i,j,\omega_+)=1-p^*_+(i,j)-p_+(i,j)$ and $\rho(\omega_-|i,j,\omega_-)=1-p^*_-(i,j)-p_-(i,j)$.

Also we observe that equations (\ref{eq1}) to  (\ref{eq4}) are characterizations of $v_\lambda$: any function $w_\lambda:\{\omega_+,\omega_-\}\to \mathds{R}$ satisfying the same system must be the discounted value of the game according to Lemma \ref{equalizinglemma}. Also remark that it implies that the discounted games have a value in pure strategies.

We first establish Theorem \ref{maintheorem} for discounted values:
\begin{theorem}\label{contreexemple}
There exists a game in $\mathcal{G}$ such that $v_\lambda$ does not converge as $\lambda$ goes to 0.
\end{theorem}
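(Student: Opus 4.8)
The plan is to exploit the equalizing conditions (\ref{eq1})--(\ref{eq4}) to solve explicitly for $v_\lambda(\omega_+)$ and $v_\lambda(\omega_-)$ in terms of the transition functions evaluated along the diagonal. Since the pure action $\lambda$ is equalizing for each player in each nonabsorbing state, I would first set $i=j=\lambda$ in the four equations and read off the relevant probabilities $p^*_+(\lambda,\lambda)$, $p_+(\lambda,\lambda)$, $p^*_-(\lambda,\lambda)$, $p_-(\lambda,\lambda)$. Following the heuristic in the motivating discussion, the natural choice is to make these transitions of order $\sqrt{\lambda}$ for the state-switching probabilities $p_\pm$ and of order $\lambda$ for the absorption probabilities $p^*_\pm$, mirroring the Bewley--Kohlberg scaling $x_\lambda\approx\sqrt{\lambda/p^*}$. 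Concretely I would aim for something like $p^*_+(\lambda,\lambda)=c\,\lambda\,\phi_+(\lambda)$ and $p_+(\lambda,\lambda)=c'\sqrt{\lambda}$ (and symmetrically for the $-$ state), where $\phi_+,\phi_-$ are bounded continuous functions that oscillate slowly.

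Next I would write the two equalizing equations on the diagonal as a linear system in the unknowns $a:=v_\lambda(\omega_+)$ and $b:=v_\lambda(\omega_-)$. After dividing through by $\lambda$ and using $1-\lambda\to 1$, the leading-order system should take the schematic form
\begin{eqnarray*}
a-1 &=& \sqrt{\lambda}\,\alpha_+(\lambda)\,(b-a) + \lambda\,\beta_+(\lambda)\,(1-a) + o(\cdot),\\
b+1 &=& \sqrt{\lambda}\,\alpha_-(\lambda)\,(a-b) + \lambda\,\beta_-(\lambda)\,(-1-b) + o(\cdot),
\end{eqnarray*}
reflecting that at leading order $a$ and $b$ are driven toward a common value by the $\sqrt{\lambda}$ switching terms. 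Solving this $2\times 2$ system and letting $\lambda\to 0$, the common limit should come out as a ratio of the form
\[
\frac{\sqrt{p^*_+(\lambda,\lambda)/\lambda}-\sqrt{p^*_-(\lambda,\lambda)/\lambda}}{\sqrt{p^*_+(\lambda,\lambda)/\lambda}+\sqrt{p^*_-(\lambda,\lambda)/\lambda}},
\]
exactly as in the constant-parameter computation, but now with $p^*_\pm/\lambda$ replaced by the slowly varying functions $\phi_\pm(\lambda)$. The point is that this expression depends only on the ratio $\phi_-(\lambda)/\phi_+(\lambda)$.

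The final and most delicate step is to choose the oscillating functions so that this ratio does not settle down. I would pick $\phi_+,\phi_-$ so that $\phi_-(\lambda)/\phi_+(\lambda)$ oscillates between two distinct positive constants as $\lambda\to 0$ — for instance driven by $\sin(\log\log(1/\lambda))$ or a similar very slowly varying term — which forces $v_\lambda(\omega_+)$ to oscillate between two distinct values and hence to diverge. The main obstacle, and where the real work lies, is the legitimacy of this whole scheme: I must construct genuinely jointly continuous transition functions $\rho$ on the full square $I\times J$ (not merely on the diagonal) that (i) take values in the probability simplex, (ii) make the pure action $\lambda$ actually equalizing for both players in $\Gamma_\lambda$ for all small $\lambda$ — i.e. the equalizing identities must hold for \emph{every} $i$ and \emph{every} $j$, not just $i=j=\lambda$ — and (iii) realize the prescribed oscillating diagonal values. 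Verifying property (ii) off the diagonal is the crux: I would need to design the dependence on the opponent's action so that the $\lambda g + (1-\lambda)\E f$ expression is genuinely constant in $j$ (resp. $i$), which constrains how $p^*_\pm$ and $p_\pm$ may vary in their two arguments, and I would then invoke Lemma \ref{equalizinglemma} to conclude that the candidate pair $(a,b)$ is indeed $v_\lambda$. Once these constructibility and continuity checks are in place, the divergence follows immediately from the oscillation built into $\phi_-/\phi_+$.
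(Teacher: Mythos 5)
There is a genuine gap, and you have located it yourself: everything you defer to your final paragraph --- constructing jointly continuous transition functions on the whole square $I\times J$ that make the pure action $\lambda$ equalizing against \emph{every} opponent action --- is not a verification to be appended after the divergence argument; it is the entire content of the proof, and your diagonal-first scheme cannot produce it. The difficulty is that the off-diagonal value $p^*_+(i,j)$ is shared between two \emph{different} discounted games: it enters the equalizing identity of Player 1 in $\Gamma_i$ (playing $i$ against an arbitrary $j$) and the equalizing identity of Player 2 in $\Gamma_j$ (playing $j$ against an arbitrary $i$). So the equalizing requirements couple all discount factors simultaneously, and once the family $\{v_\lambda\}$ is fixed, the pair $\left(p^*_+(\lambda,\mu), p_+(\lambda,\mu)\right)$ is \emph{forced}, for each $\lambda\neq\mu$, as the unique solution of a $2\times 2$ linear system. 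In particular you are not free to prescribe the diagonal data $p^*_\pm(\lambda,\lambda)\sim c\lambda\phi_\pm(\lambda)$, $p_\pm(\lambda,\lambda)\sim c'\sqrt{\lambda}$ first and extend afterwards: the diagonal is itself determined, by continuity, from the forced off-diagonal values, so your free parameters $\phi_\pm$ may simply be inconsistent with the existence of any valid extension.

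The paper resolves this by inverting the roles of data and unknowns, which is the key idea missing from your proposal. One fixes the target value family first, writing $v_\lambda(\omega_\pm)=s(\lambda)\pm d(\lambda)$ with $d(\lambda)=\sqrt{\lambda}$ and $s$ a bounded oscillating function such as $s(x)=\frac{\sin\ln x}{16}$, and then \emph{solves} the coupled system explicitly for the four transition functions, obtaining the closed-form expressions (\ref{eqdef1})--(\ref{eqdef4}). By construction these satisfy all the equalizing identities off the diagonal, so by Lemma \ref{lemmedefp} (which rests on Lemma \ref{equalizinglemma}) the only thing left to verify is analytic: that the explicit formulas are well defined, take values in $\left[0,\frac{1}{2}\right]$, and extend continuously to the closed square. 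That verification --- Lemma \ref{basic} and the four continuity lemmas, which exploit the hypotheses that $s$ and $x s'(x)$ are bounded by $\frac{1}{16}$ --- is where the quantitative work lies, and it is also what determines which pairs $(s,d)$ are feasible at all. Your intuition about the source of oscillation (a slowly oscillating ratio of absorption parameters) matches the paper's motivating discussion, but as a proof the proposal stops exactly where the paper's argument begins.
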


The idea of the construction of such an example is to think of the family $\{v_\lambda\}_{\lambda\in \left]0,\frac{1}{16}\right]}$ as a parameter of the game, and of the transition functions as unknowns, rather than the opposite. The construction is done in three steps: first, for any family $v_\lambda$ we identify good candidates $p^*_+$, $p^*_-$, $p_+$, $p_-$ that may lead to value $v_\lambda$ in $\Gamma_\lambda$. These candidate functions are in general neither in $[0,1]$ nor continuous ; but in a second step we show that when it is the case they indeed define a game in $\mathcal{G}$ with value $v_\lambda$. Finally, we find a family $v_\lambda$ that does not converge as $\lambda$ goes to 0, but such that the constructed candidates $p^*_+$, $p^*_-$, $p_+$, $p_-$ have the required regularity.

So let us fix a family $v_\lambda$ and try to find suitable functions $p^*_+$, $p^*_-$, $p_+$, and $p_-$.
By simplifying a bit equations (\ref{eq1}) to (\ref{eq4}), and replacing $\lambda$ by $\mu$ in (\ref{eq2}) and  (\ref{eq4}) one gets the following system (where $\lambda$ and $\mu$ are in $\left]0,\frac{1}{16}\right]$ while $i$ and $j$ are in $\left[0,\frac{1}{16}\right]$):

\begin{eqnarray}
\label{equationplus}v_\lambda(\omega_+)&=&\frac{\lambda + (1-\lambda)\left[p^*_+(\lambda,j)+p_+(\lambda,j)v_\lambda(\omega_-)\right]}{\lambda + (1-\lambda)p^*_+(\lambda,j)+(1-\lambda)p_+(\lambda,j)}\\
\label{equationplus2} v_\mu(\omega_+)&=&\frac{\mu + (1-\mu)\left[p^*_+(i,\mu)+p_+(i,\mu)v_\mu(\omega_-)\right]}{\mu + (1-\mu)p^*_+(i,\mu)+(1-\mu)p_+(i,\mu)}\\
\label{equationmoins}v_\lambda(\omega_-)&=&\frac{-\lambda + (1-\lambda)\left[-p^*_-(\lambda,j)+p_-(\lambda,j)v_\lambda(\omega_+)\right]}{\lambda + (1-\lambda)p^*_-(\lambda,j)+(1-\lambda)p_-(\lambda,j)}\\
\label{equationmoins2} v_\mu(\omega_-)&=&\frac{-\mu + (1-\mu)\left[-p^*_-(i,\mu)+p_-(i,\mu)v_\mu(\omega_+)\right]}{\mu + (1-\mu)p^*_-(i,\mu)+(1-\mu)p_-(i,\mu)}
\end{eqnarray}

In particular taking $j=\mu$ in (\ref{equationplus}) and $i=\lambda$ in (\ref{equationplus2}) one gets, for each couple $\lambda,\mu$ in $\left]0,\frac{1}{16}\right]$, the system

\[
\begin{cases}
v_\lambda(\omega_+)&=\frac{\lambda + (1-\lambda)\left[p^*_+(\lambda,\mu)+p_+(\lambda,\mu)v_\lambda(\omega_-)\right]}{\lambda + (1-\lambda)p^*_+(\lambda,\mu)+(1-\lambda)p_+(\lambda,\mu)}\\
v_\mu(\omega_+)&=\frac{\mu + (1-\mu)\left[p^*_+(\lambda,\mu)+p_+(\lambda,\mu)v_\mu(\omega_-)\right]}{\mu + (1-\mu)p^*_+(\lambda,\mu)+(1-\mu)p_+(\lambda,\mu)}.
\end{cases}
\]

It is convenient to denote $s(\lambda)=\frac{v_\lambda(\omega_+)+v_\lambda(\omega_-)}{2}$ and $d(\lambda)=\frac{v_\lambda(\omega_+)-v_\lambda(\omega_-)}{2}$, so the system becomes

\[
\begin{cases}
 (1-\lambda)(s(\lambda)+d(\lambda)-1)p^*_+(\lambda,\mu) +2(1-\lambda)d(\lambda) p_+(\lambda,\mu) &=\lambda (1-s(\lambda)-d(\lambda))\\
 (1-\mu)(s(\mu)+d(\mu)-1)p^*_+(\lambda,\mu) +2(1-\mu)d(\mu) p_+(\lambda,\mu) &=\mu (1-s(\mu)-d(\mu))\\\end{cases}.
\]

When $\lambda\neq \mu$ the unique solution (assuming for a moment that the system is not degenerate) is given by

 \begin{eqnarray}
\label{eqdef1}p_+(\lambda,\mu)& =& \frac{(\lambda-\mu)(1-s(\lambda)-d(\lambda))(1-s(\mu)-d(\mu))}{2(1-\lambda)(1-\mu)[d(\lambda)(1-s(\mu))-d(\mu)(1-s(\lambda))]}\\
p^*_+(\lambda,\mu) &=& \frac{\lambda(1-\mu) d(\mu)(1-s(\lambda)-d(\lambda))-\mu(1-\lambda) d(\lambda)(1-s(\mu)-d(\mu))}  {(1-\label{eqdef2}\lambda)(1-\mu)[d(\lambda)(1-s(\mu))-d(\mu)(1-s(\lambda))]}
\end{eqnarray}

Similarly, considering equations  (\ref{equationmoins}) and (\ref{equationmoins2}) yields, for $\lambda\neq\mu$ in $\left]0,\frac{1}{16}\right]$

 \begin{eqnarray}
\label{eqdef3}p_-(\lambda,\mu)& =& \frac{(\lambda-\mu)(1+s(\lambda)-d(\lambda))(1+s(\mu)-d(\mu))}{2(1-\lambda)(1-\mu)[d(\lambda)(1+s(\mu))-d(\mu)(1+s(\lambda))]}\\
\label{eqdef4}p^*_-(\lambda,\mu) &=& \frac{\lambda(1-\mu) d(\mu)(1+s(\lambda)-d(\lambda))-\mu (1-\lambda) d(\lambda)(1+s(\mu)-d(\mu))}  {(1-\lambda)(1-\mu)[d(\lambda)(1+s(\mu))-d(\mu)(1+s(\lambda))]}
\end{eqnarray}

In general there is no guarantee that the functions defined by equations (\ref{eqdef1}) to (\ref{eqdef4}) will be positive, continuously extendable, or even well defined. However we now show that when they are, they define a game in $\mathcal{G}$.
\begin{defi}
A pair $(s, d)$ of continuous functions from $]0,\frac{1}{16}]$ to $\mathds{R}$ is \emph{feasible} if there exists a game in $\mathcal{G}$ such that
\begin{eqnarray*}
v_\lambda(\omega_+)&=&s(\lambda)+d(\lambda)\\
v_\lambda(\omega_-)&=&s(\lambda)-d(\lambda).
\end{eqnarray*}
\end{defi}

\begin{lem}\label{lemmedefp}
Assume that for $\lambda\neq\mu$ in $\left]0,\frac{1}{16}\right]$, the quantities defined in equations (\ref{eqdef1}) to (\ref{eqdef4}) are well defined, with value in $\left[0,\frac{1}{2}\right]$. Also assume that the four functions can be continuously extended to $\left[0,\frac{1}{16}\right]^2$. Then $(s, d)$ is feasible.
\end{lem}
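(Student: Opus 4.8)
The plan is to turn the four candidate functions of (\ref{eqdef1})--(\ref{eqdef4}) into the transition data of an actual game, reading their first variable as Player~1's action and their second as Player~2's action, and then to invoke Lemma~\ref{equalizinglemma} to certify that the discounted values of this game are exactly $s(\lambda)\pm d(\lambda)$.

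First I would build the game $\Gamma$. I take the states $\omega_+,\omega_-,1^*,-1^*$, action sets $I=J=\left[0,\frac1{16}\right]$ and the state-dependent payoff dictated by (1)--(3), and I define, for $(i,j)\in I\times J$, the functions $p^*_+(i,j)$, $p_+(i,j)$, $p^*_-(i,j)$, $p_-(i,j)$ to be the continuous extensions to $\left[0,\frac1{16}\right]^2$ of (\ref{eqdef2}), (\ref{eqdef1}), (\ref{eqdef4}), (\ref{eqdef3}), which exist by hypothesis. The remaining transitions are forced: $\rho(\omega_+|i,j,\omega_+)=1-p^*_+(i,j)-p_+(i,j)$, $\rho(\omega_-|i,j,\omega_-)=1-p^*_-(i,j)-p_-(i,j)$, and $\rho(-1^*|i,j,\omega_+)=\rho(1^*|i,j,\omega_-)=0$. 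Properties (1)--(3) then hold by construction, continuity of $\rho$ is the extendability assumption so (4) holds, and the range hypothesis $p^*_\pm,p_\pm\in\left[0,\frac12\right]$ gives $p^*_\pm+p_\pm\le1$, which is exactly what makes the two "stay" probabilities nonnegative and each $\rho(\cdot|i,j,\omega)$ a genuine element of $\Delta(\Omega)$.

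Next I would verify property (5) and identify the values. Set $w_\lambda(\omega_+)=s(\lambda)+d(\lambda)$, $w_\lambda(\omega_-)=s(\lambda)-d(\lambda)$, $w_\lambda(1^*)=1$, $w_\lambda(-1^*)=-1$. At the absorbing states every action is trivially equalizing for $w_\lambda$. At $\omega_+$ I claim the pure action $\lambda$ is equalizing for both players. For a fixed $j\neq\lambda$ the pair $(p^*_+(\lambda,j),p_+(\lambda,j))$ is by construction the (non-degenerate) solution of the $2\times2$ system attached to the couple $(\lambda,j)$, and the \emph{first} equation of that system is precisely (\ref{eq1}) read at Player~1's action $\lambda$ and Player~2's action $j$; so (\ref{eq1}) holds for every $j\neq\lambda$. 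Symmetrically, for $i\neq\lambda$ the pair $(p^*_+(i,\lambda),p_+(i,\lambda))$ solves the system attached to $(i,\lambda)$, whose \emph{second} equation is exactly (\ref{eq2}) read at Player~1's action $i$ and Player~2's action $\lambda$; so (\ref{eq2}) holds for every $i\neq\lambda$. The same reasoning at $\omega_-$, using (\ref{eqdef3})--(\ref{eqdef4}) and (\ref{eq3})--(\ref{eq4}), makes $\lambda$ equalizing there as well.

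Finally I would close the diagonal and conclude. The construction only delivers (\ref{eq1}) for $j\neq\lambda$ and (\ref{eq2}) for $i\neq\lambda$; but both sides of each equation are continuous in the free action (the $p$'s being continuously extended and $s,d$ continuous), so the identities survive the limits $j\to\lambda$ and $i\to\lambda$. Thus in every state both players possess an equalizing action for $w_\lambda$, and Lemma~\ref{equalizinglemma} forces $w_\lambda=v_\lambda$; in particular $v_\lambda(\omega_+)=s(\lambda)+d(\lambda)$ and $v_\lambda(\omega_-)=s(\lambda)-d(\lambda)$, so $(s,d)$ is feasible. I expect the main obstacle to be the bookkeeping of the third step: seeing that one and the same solution of the $2\times2$ system validates (\ref{eq1}) ``for all $j$'' and (\ref{eq2}) ``for all $i$'' through its two distinct equations, together with the observation that the constraint $\left[0,\frac12\right]$ is precisely what guarantees $\rho$ remains a transition probability; the diagonal case is then a routine continuity argument.
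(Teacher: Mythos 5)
Your proof is correct and takes essentially the same route as the paper's: build the game from the continuously extended transition functions (checking the probability constraints via the $\left[0,\frac{1}{2}\right]$ bound), note that equations (\ref{eq1})--(\ref{eq4}) hold by construction for actions off the diagonal $i,j\neq\lambda$, extend to the diagonal by continuity, and conclude via Lemma \ref{equalizinglemma}. Your explicit bookkeeping of how the two equations of each $2\times 2$ system yield the equalizing property ``for all $j$'' and ``for all $i$'' respectively is precisely what the paper's terse phrase ``by construction'' abbreviates.
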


\begin{proof}
Let $\Gamma$ be the stochastic game satisfying assumptions a) to d) and with transitions functions defined by equations (\ref{eqdef1}) to (\ref{eqdef4}) (and their continuous extensions), by $\rho(\omega_+|i,j,\omega_+)=1-p^*_+(i,j)-p_+(i,j)\in[0,1]$ and by $\rho(\omega_-|i,j,\omega_-)=1-p^*_-(i,j)-p_-(i,j)\in[0,1]$. It remains to show that assumption e) is satisfied, with $v_\lambda(\omega_+)=s(\lambda)+d(\lambda)$ and $v_\lambda(\omega_-)=s(\lambda)-d(\lambda)$. By construction, for every discount factor $\lambda$ equations (\ref{eq1}) to (\ref{eq4}) are satisfied for $i$ and $j$ in $\left]0,\frac{1}{16}\right]\backslash\{\lambda\}$, and so by continuity they are satisfied for $i$ and $j$ in $\left[0,\frac{1}{16}\right]$.
\end{proof}

%
%

\subsection{Construction of a specific counterexample}\label{sectiontechnique}
To establish Theorem \ref{contreexemple} it is thus enough to find a couple $(s, d)$ such that the assumptions of Lemma \ref{lemmedefp} are satisfied but $s(\lambda)\pm d(\lambda)$ does not converge as $\lambda$ goes to 0. We first give an intuition leading to our choice of specific $d$ and $s$. 

Let $(s,d)$ be any feasible couple. Then, for the values not to converge, it is necessary that $d(\lambda)$ tends slowly to 0 as $\lambda$ goes to 0, for the following reasons.
\begin{itemize}
\item Let $v_1$ and $v_2$ be any two accumulation points of $v_\lambda$ such that  $\max_\omega\{v_1(\omega)-v_2(\omega)\}>0$. Define $\Omega_1=\Argmax_\omega\{v_1(\omega)-v_2(\omega)\}$ and $\Omega_2=\Argmax_{\omega\in\Omega_1}\{v_1(\omega)\}$. Reasoning as in \cite{SoVi} yields to a contradiction as soon as $\Omega_2$ is a singleton ; this implies that $d(\lambda)$ goes to 0 as $\lambda$ goes to 0.
\item Assume for example that $d(\lambda)=0$ for $\lambda$ small enough. Then $v_\lambda(\omega_+)=v_\lambda(\omega_-)$, hence the values won't change if we replace any transition from $\omega^+$ to $\omega^-$ by a transition from $\omega^+$ to $\omega^+$ ; and any transition from $\omega^-$ to $\omega^+$ by a transition from $\omega^-$ to $\omega^-$. But the resulting game is just two absorbing games played in parallel, and absorbing games have an asymptotic value, a contradiction. If $d(\lambda)= o(\lambda)$, the values won't change "much" in the auxiliary game, and the contradiction is the same.
\end{itemize}

Denote by $\sqrt{\phantom{x}}$ the function $x\to\sqrt{x}$. Because of the reasons stated above, in this section we fix $d=\sqrt{\phantom{x}}$. Since the payoff function is bounded, it is easy to see that if $(s,\sqrt{\phantom{x}})$ is feasible and $s$ is continuously differentiable, then $s$ and $\lambda s'(\lambda)$ are bounded. We now prove a reciprocal:

\begin{proposition}\label{mainprop}
Let\footnote{We denote ${C}^1(A,B)$ the set of continuously differentiable functions from $A$ to $B$.} $s\in {C}^1(]0,\frac{1}{16}],\mathds{R})$. Assume that $s$ and $x\rightarrow x s'(x)$ are both bounded by $\frac{1}{16}$. Then $\left(s, \sqrt{\phantom{x}}\right)$ is feasible.
\end{proposition}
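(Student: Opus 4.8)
The strategy is to verify the hypotheses of Lemma \ref{lemmedefp} for the specific choice $d=\sqrt{\phantom{x}}$, so that feasibility follows immediately. That is, I must substitute $d(\lambda)=\sqrt{\lambda}$ into the four defining formulas (\ref{eqdef1})--(\ref{eqdef4}) and check three things: that the resulting $p_+,p^*_+,p_-,p^*_-$ are (i) well defined (denominators nonvanishing), (ii) valued in $\left[0,\frac{1}{2}\right]$, and (iii) continuously extendable to the closed square $\left[0,\frac{1}{16}\right]^2$, including across the diagonal $\lambda=\mu$ and down to the boundary $\lambda=0$ or $\mu=0$. Once these are established, Lemma \ref{lemmedefp} gives that $(s,\sqrt{\phantom{x}})$ is feasible.

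The first concrete step is to rewrite the denominators. With $d(\lambda)=\sqrt{\lambda}$, the bracket $d(\lambda)(1-s(\mu))-d(\mu)(1-s(\lambda))$ appearing in (\ref{eqdef1})--(\ref{eqdef2}) becomes $\sqrt{\lambda}(1-s(\mu))-\sqrt{\mu}(1-s(\lambda))$. The key algebraic observation is that $\sqrt{\lambda}-\sqrt{\mu}$ factors as $(\lambda-\mu)/(\sqrt{\lambda}+\sqrt{\mu})$, and this same factor $\lambda-\mu$ sits in the numerator of $p_+$. So the plan is to factor $\lambda-\mu$ out of both numerator and denominator: after dividing, the troublesome singularity along the diagonal should cancel, leaving a quotient whose limit as $\mu\to\lambda$ is governed by derivatives of $s$ (this is precisely why the hypothesis involves $\lambda s'(\lambda)$). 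Concretely I expect the limiting value on the diagonal to be expressible through $s(\lambda)$ and $\lambda s'(\lambda)$, and the boundedness assumptions $\|s\|_\infty\le\frac{1}{16}$ and $\|\lambda s'(\lambda)\|_\infty\le\frac{1}{16}$ are exactly what forces the quotients to stay in $\left[0,\frac{1}{2}\right]$. I would carry out the same factorization for $p_-,p^*_-$, which are identical in structure with $s$ replaced by $-s$, so no new work is needed there.

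The main obstacle, and the heart of the proof, is controlling the denominator $\sqrt{\lambda}(1-s(\mu))-\sqrt{\mu}(1-s(\lambda))$ uniformly: I must show it has a definite sign (so the formulas are well defined and positive) and that after cancelling the common factor the remaining expression is bounded away from zero, even in the double limit $\lambda,\mu\to 0$. Writing $h(\lambda)=(1-s(\lambda))/\sqrt{\lambda}$ or rather examining $\sqrt{\lambda}(1-s(\mu))-\sqrt{\mu}(1-s(\lambda))=\sqrt{\lambda\mu}\,\bigl[(1-s(\mu))/\sqrt{\mu}-(1-s(\lambda))/\sqrt{\lambda}\bigr]$, the sign and nondegeneracy reduce to monotonicity/mean-value control of $\lambda\mapsto(1-s(\lambda))/\sqrt{\lambda}$, whose derivative is $\bigl(-\lambda s'(\lambda)-\tfrac{1}{2}(1-s(\lambda))\bigr)/\lambda^{3/2}$; the bounds on $s$ and $\lambda s'(\lambda)$ make the numerator here strictly negative (since $1-s\ge 1-\frac{1}{16}$ dominates $\lambda s'$ and $\frac12$), giving strict monotonicity and hence a nonzero, sign-definite denominator. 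I would do this estimate carefully, as everything else (well-definedness, the $\left[0,\frac{1}{2}\right]$ bound, and continuous extension by taking limits along and to the boundary) follows once this denominator is understood. The continuous extendability across the diagonal is then a routine l'Hôpital/Taylor computation using $s\in C^1$, and extendability to $\lambda=0$ follows from the uniform estimates established above.
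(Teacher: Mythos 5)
Your overall strategy coincides with the paper's: check the hypotheses of Lemma \ref{lemmedefp} for $d=\sqrt{\phantom{x}}$, factor $\sqrt{\lambda}-\sqrt{\mu}$ out of the formulas (\ref{eqdef1})--(\ref{eqdef4}), and control what remains using the bounds on $s$ and $x s'(x)$. Your treatment of the denominator is essentially the paper's: the bracket $\sqrt{\lambda}(1-s(\mu))-\sqrt{\mu}(1-s(\lambda))$ equals $(\sqrt{\lambda}-\sqrt{\mu})\bigl(1+f_2(\lambda,\mu)\bigr)$, where $f_2(\lambda,\mu)=\frac{\sqrt{\mu}s(\lambda)-\sqrt{\lambda}s(\mu)}{\sqrt{\lambda}-\sqrt{\mu}}$ is exactly the divided difference that the paper bounds in Lemma \ref{basic}, and your derivative computation for $h(\lambda)=(1-s(\lambda))/\sqrt{\lambda}$ is the correct pointwise input. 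One caution on execution: the pointwise mean value theorem, $h(\mu)-h(\lambda)=h'(\xi)(\mu-\lambda)$ at an unknown $\xi$, is \emph{not} sufficient for the quantitative bounds, because the two-sided estimate $|h'(\xi)|\asymp \xi^{-3/2}$ is off by a factor $(\mu/\lambda)^{3/2}$ depending on where $\xi$ falls; with $\mu$ fixed and $\lambda\to 0$ this only yields $p_+\lesssim \mu/\sqrt{\lambda}$, which ruins the $\bigl[0,\frac{1}{2}\bigr]$ bound. You must instead integrate the pointwise bound, as the paper does:
\[
|h(\mu)-h(\lambda)|=\int_\lambda^\mu |h'(z)|\,dz\in\left[\tfrac{13}{16},\tfrac{19}{16}\right]\frac{\sqrt{\mu}-\sqrt{\lambda}}{\sqrt{\lambda\mu}},
\]
which is the content of the bound $|f_2|\le 3C$ in Lemma \ref{basic}.

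The genuine gap is your claim that ``everything else (well-definedness, the $\bigl[0,\frac{1}{2}\bigr]$ bound, and continuous extension\dots) follows once this denominator is understood.'' That is true for $p_\pm$, whose numerators carry the factor $\lambda-\mu=(\sqrt{\lambda}-\sqrt{\mu})(\sqrt{\lambda}+\sqrt{\mu})$ explicitly, but it is false for $p^*_\pm$. The numerator of $p^*_+$ in (\ref{eqdef2}), after pulling out $\sqrt{\lambda\mu}$, is
\[
\sqrt{\lambda}(1-\mu)\bigl(1-s(\lambda)-\sqrt{\lambda}\bigr)-\sqrt{\mu}(1-\lambda)\bigl(1-s(\mu)-\sqrt{\mu}\bigr),
\]
which vanishes on the diagonal: it has its own $0/0$ cancellation. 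Exhibiting its factor $\sqrt{\lambda}-\sqrt{\mu}$ requires controlling a \emph{second} divided difference, $f_1(\lambda,\mu)=\frac{\sqrt{\lambda}s(\lambda)-\sqrt{\mu}s(\mu)}{\sqrt{\lambda}-\sqrt{\mu}}$, i.e.\ that of $z\mapsto\sqrt{z}\,s(z)$ rather than of $(1-s(z))/\sqrt{z}$; indeed the numerator equals $(\sqrt{\lambda}-\sqrt{\mu})\bigl[(1-\sqrt{\lambda})(1-\sqrt{\mu})-f_1(\lambda,\mu)+\sqrt{\lambda\mu}\,f_2(\lambda,\mu)\bigr]$. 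Without a bound of the type $|f_1|\le 3C$ (proved by the same integral technique applied to $(\sqrt{z}\,s(z))'$, as in Lemma \ref{basic}) and the continuity of $f_1$ across the diagonal, you cannot establish the positivity of $p^*_\pm$, the bound $p^*_\pm\le \mathrm{const}\cdot\sqrt{\lambda\mu}\le\frac{1}{2}$, nor the continuous extension by $0$ at the boundary $\lambda\mu=0$. So your plan needs one additional estimate, parallel to but distinct from the one you carried out, before Lemma \ref{lemmedefp} can be invoked.
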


Proposition \ref{contreexemple} is an immediate consequence since there are functions $s(x)$ satisfying the assumptions of Proposition \ref{mainprop} but without a limit as $x$ goes to 0. Take for example $s(x)=\frac{\sin \ln x}{16}$.

We start by a technical lemma.
\begin{lem}\label{basic}
Let $s\in {C}^1(]0,\frac{1}{16}],\mathds{R})$. Assume that $s$ and $x\rightarrow x s'(x)$ are both bounded by $C$.
Then the two functions defined on $]0, \frac{1}{16}[^2$ to $\mathds{R}$ by
\[
f_1(x,y)=\begin{cases}
\frac{\sqrt{x} s(x)-\sqrt{y} s(y)}{\sqrt{x}-\sqrt{y}}& \text{if } x\neq y\\
2xs'(x)+s(x)  & \text{if } x=y
\end{cases}
\]
and
\[
f_2(x,y)=\begin{cases}
\frac{\sqrt{y} s(x)-\sqrt{x} s(y)}{\sqrt{x}-\sqrt{y}}& \text{if } x\neq y\\
2xs'(x)-s(x)  & \text{if } x=y
\end{cases}
\]

are jointly continuous and bounded by $3C$.
\end{lem}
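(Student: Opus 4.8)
The plan is to pass to the variables $u=\sqrt{x}$ and $v=\sqrt{y}$ and to set $g(u)=s(u^2)$, so that $g$ is $C^1$ on $]0,\frac{1}{4}[$ with $g'(u)=2u\,s'(u^2)$; in these terms the hypotheses read $|g(u)|\le C$ and $|u\,g'(u)|=|2u^2 s'(u^2)|\le 2C$. Both $f_1$ and $f_2$ then become \emph{divided differences} of $C^1$ functions, and the whole proof reduces to two standard facts: a divided difference of a $C^1$ function is jointly continuous, and it is controlled by the derivative through the mean value theorem. Note that the square $]0,\frac{1}{16}[^2$ in $(x,y)$ corresponds to $(u,v)\in\,]0,\frac14[^2$, which is convex, so segments between $u$ and $v$ stay in the domain of $g'$.

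For $f_1$, introduce $\phi(u)=u\,g(u)=\sqrt{x}\,s(x)$. Then $f_1(x,y)=\frac{\phi(u)-\phi(v)}{u-v}$ for $x\neq y$, while $\phi'(u)=g(u)+u\,g'(u)=s(x)+2x\,s'(x)$ matches the prescribed diagonal value $f_1(x,x)$. Joint continuity follows from the integral representation
\[
\frac{\phi(u)-\phi(v)}{u-v}=\int_0^1 \phi'\bigl(v+t(u-v)\bigr)\,dt ,
\]
valid for all $u,v$ (giving $\phi'(v)$ when $u=v$), since the integrand is continuous in $(u,v,t)$. For the bound, the mean value theorem gives $f_1(x,y)=\phi'(\xi)$ for some $\xi$ between $u$ and $v$; writing $\zeta=\xi^2\in\,]0,\frac{1}{16}[$ we get $|f_1(x,y)|=|s(\zeta)+2\zeta\,s'(\zeta)|\le C+2C=3C$.

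For $f_2$ the same substitution gives $f_2(x,y)=\frac{v\,g(u)-u\,g(v)}{u-v}$, and the key algebraic step is the decomposition
\[
v\,g(u)-u\,g(v)=v\bigl(g(u)-g(v)\bigr)-(u-v)\,g(v),
\]
so that $f_2=v\,D(u,v)-g(v)$, where $D(u,v)=\frac{g(u)-g(v)}{u-v}$ (with $D(u,u)=g'(u)$) is the divided difference of $g$; at $u=v$ this yields $u\,g'(u)-g(u)=2x\,s'(x)-s(x)$, as required. Joint continuity of $f_2$ is then immediate, since $D$ is jointly continuous by the same integral representation and $v\mapsto v$, $v\mapsto g(v)$ are continuous.

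The only genuine obstacle is the bound on $f_2$, because this decomposition is \emph{asymmetric} in $u$ and $v$: the naive symmetric estimate $f_2=f_1-(s(x)+s(y))$ only delivers $5C$. The remedy is that $f_2$ is visibly symmetric in $x$ and $y$, so we may assume without loss of generality $v\le u$, i.e. $y\le x$. Then the mean value theorem gives $D(u,v)=g'(\xi)$ with $\xi\in[v,u]$, whence $v\le\xi$ and
\[
|v\,D(u,v)|=|2v\xi\,s'(\xi^2)|=2\tfrac{v}{\xi}\,\bigl|\xi^2 s'(\xi^2)\bigr|\le 2\tfrac{v}{\xi}\,C\le 2C ,
\]
while $|g(v)|=|s(v^2)|\le C$; hence $|f_2|\le 3C$. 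Placing the smaller variable in the role that forces $v/\xi\le 1$ is precisely the point that makes the bound work, and it is where I expect the argument to need care.
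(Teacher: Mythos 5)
Your proof is correct, and it departs from the paper's in a way worth recording. The paper stays in the variable $x$: it writes $f_1(x,y)=(\sqrt{x}+\sqrt{y})\,\frac{\phi(x)-\phi(y)}{x-y}$ with $\phi(z)=\sqrt{z}\,s(z)$, and $f_2(x,y)=(\sqrt{x}+\sqrt{y})\sqrt{xy}\,\frac{\psi(x)-\psi(y)}{x-y}$ with $\psi(z)=s(z)/\sqrt{z}$; continuity comes from the mean value theorem as in your argument, but both bounds come from \emph{weighted integral} estimates: the pointwise bounds $|\phi'(z)|\le \frac{3C}{2\sqrt{z}}$ and $|\psi'(z)|\le \frac{3C}{2z\sqrt{z}}$ are not uniform, yet they integrate exactly right, $\frac{1}{\sqrt{x}-\sqrt{y}}\int_y^x\frac{3C}{2\sqrt{z}}\,dz=3C$ and $\frac{\sqrt{xy}}{\sqrt{x}-\sqrt{y}}\int_y^x\frac{3C}{2z\sqrt{z}}\,dz=3C$. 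Your substitution $u=\sqrt{x}$ renders the paper's $f_1$ estimate transparent: in the $u$ variable the relevant derivative is uniformly bounded by $3C$, so the plain mean value theorem suffices, and the two computations are the same argument in different coordinates. Where you genuinely diverge is $f_2$: after your substitution, the paper's $\psi$ becomes $h(u)=g(u)/u$ and $f_2=uv\,\frac{h(u)-h(v)}{u-v}$, but a plain mean value bound here only gives $3C\,uv/\xi^2$, which can exceed $3C$; the paper resolves this with the integral form of the estimate, while you sidestep it entirely via the asymmetric decomposition $f_2=v\,D(u,v)-g(v)$ together with the symmetry (WLOG $v\le u$) argument that forces $v/\xi\le 1$. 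Both routes are sound: the paper's integral template handles $f_1$ and $f_2$ uniformly with no case distinction, whereas yours is more elementary (no integrals in the bounds) at the price of the extra decomposition and the symmetry step, which you correctly identified as the one place requiring care.
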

We stress out that we do not need $s$ to have a limit, as $x$ goes to 0, for this lemma to hold (and, in fact, this is precisely what will allow us to construct our counterexample).

\begin{proof}
For $x\neq y$, $f_1(x,y)=(\sqrt{x}+\sqrt{y})\frac{\sqrt{x} s(x)-\sqrt{y} s(y)}{x-y}$, hence the mean value theorem ensures that $f_1$ is continuous. Moreover, for $y<x$,
\begin{eqnarray*}
|f_1(x,y)|&\leq&\frac{1}{\sqrt{x}-\sqrt{y}}\int_y^x \left| \left(\sqrt{z} s(z)\right)^{'} \right|dz \\
&=&\frac{1}{\sqrt{x}-\sqrt{y}}\int_y^x \left| \sqrt{z}s'(z)+\frac{s(z)}{2\sqrt{z}}  \right| dz\\
&\leq& \frac{1}{\sqrt{x}-\sqrt{y}}\int_y^x \frac{3Cdz}{2\sqrt{z}}\\
&=&3C.
\end{eqnarray*}

For $x\neq y$, $f_2(x,y)=(\sqrt{x}+\sqrt{y})\sqrt{xy}\frac{\frac{ s(x)}{\sqrt{x}}-\frac{ s(y)}{\sqrt{y}}}{x-y}$, hence the mean value theorem ensures that $f_2$ is continuous. Moreover, for $y<x$,
\begin{eqnarray*}
|f_2(x,y)|&\leq&\frac{\sqrt{xy}}{\sqrt{x}-\sqrt{y}}\int_y^x \left| \left( \frac{s(z)}{\sqrt{z}}\right)^{'}  \right|dz \\
&=&\frac{\sqrt{xy}}{\sqrt{x}-\sqrt{y}}\int_y^x \left| \frac{s'(z)}{\sqrt{z}}-\frac{s(z)}{2z\sqrt{z}}  \right| dz\\
&\leq& \frac{\sqrt{xy}}{\sqrt{x}-\sqrt{y}}\int_y^x \frac{3Cdz}{2z\sqrt{z}}\\
&=&3C.
\end{eqnarray*}
\end{proof}

%
%
%
%
%

For $d=\sqrt{\phantom{x}}$, we remark that the quantities defined in (\ref{eqdef1}) to (\ref{eqdef4}) can be rewritten as,
for $\lambda\neq \mu $ in $\left]0,\frac{1}{16}\right]$,

\begin{eqnarray}
p_+(\lambda,\mu)& =&\frac{(\sqrt{\lambda}+\sqrt{\mu})(1-\sqrt{\lambda}-s(\lambda))(1-\sqrt{\mu}-s(\mu))}{2(1-\lambda)(1-\mu)(1+f_2(\lambda,\mu))}  \\
p^*_+(\lambda,\mu) &=&\frac{\sqrt{\lambda\mu}\left[(1-\sqrt{\lambda})(1-\sqrt{\mu})-f_1(\lambda,\mu)+\sqrt{\lambda\mu}f_2(\lambda,\mu)\right]}{(1-\lambda)(1-\mu)(1+f_2(\lambda,\mu))} \\
p_-(\lambda,\mu)& =&\frac{(\sqrt{\lambda}+\sqrt{\mu})(1-\sqrt{\lambda}+s(\lambda))(1-\sqrt{\mu}+s(\mu)}{2(1-\lambda)(1-\mu)(1-f_2(\lambda,\mu))} \\
p^*_-(\lambda,\mu) &=& \frac{\sqrt{\lambda\mu}\left[(1-\sqrt{\lambda})(1-\sqrt{\mu})+f_1(\lambda,\mu)-\sqrt{\lambda\mu}f_2(\lambda,\mu)\right]}{(1-\lambda)(1-\mu)(1-f_2(\lambda,\mu))}.
\end{eqnarray}

The four following lemmas establish that the regularity conditions in Lemma \ref{lemmedefp} are satisfied under the assumptions of Proposition \ref{mainprop}.
\begin{lem}\label{contpstarplus}
Let $s\in {C}^1(]0,\frac{1}{16}],\mathds{R})$. Assume that $s$ and $x\rightarrow x s'(x)$ are both bounded by $\frac{1}{16}$. Then
the function defined on $\left[0,\frac{1}{16}\right]^2$ by

\[
p^*_+(x,y)=\begin{cases}
\frac{\sqrt{xy}\left[(1-\sqrt{x})(1-\sqrt{y})-f_1(x,y)+\sqrt{xy}f_2(x,y)\right]}{(1-x)(1-y)(1+f_2(x,y))} & \text{if } xy>0\\
 0   & \text{if } xy=0
\end{cases}
\]
is well defined, jointly continuous, with value in $\left[0,\frac{1}{2}\right]$.
\end{lem}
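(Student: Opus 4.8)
The plan is to treat the three assertions---well-definedness, joint continuity, and the bound into $\left[0,\frac12\right]$---separately, using Lemma \ref{basic} (with $C=\frac1{16}$) as the only nontrivial input. First I would record that under the hypotheses of the proposition, Lemma \ref{basic} provides jointly continuous functions $f_1,f_2$ bounded by $3C=\frac3{16}$; the same mean-value argument shows this bound and the continuity persist on the half-open square $]0,\frac1{16}]^2$, so they are available at every point with $xy>0$ (including the edges $x=\frac1{16}$ or $y=\frac1{16}$). Since $x,y\le\frac1{16}<1$ we have $(1-x)(1-y)>0$, and since $|f_2|\le\frac3{16}$ we have $1+f_2\ge\frac{13}{16}>0$; hence the denominator $(1-x)(1-y)(1+f_2(x,y))$ is strictly positive and bounded away from $0$, so $p^*_+$ is well defined on $\{xy>0\}$.

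For continuity, on the open set $\{xy>0\}$ the function is a quotient of continuous functions (compositions of $\sqrt{\cdot}$, the identity, and $f_1,f_2$) with nonvanishing denominator, hence continuous. The only delicate point is continuity at a boundary point $(x_0,y_0)$ with $x_0y_0=0$, where the value is declared to be $0$. Here I would exploit the explicit factor $\sqrt{xy}$ in the numerator: writing $B(x,y)=(1-\sqrt x)(1-\sqrt y)-f_1(x,y)+\sqrt{xy}\,f_2(x,y)$ for the bracket, $B$ is uniformly bounded (each term is, using $|f_1|,|f_2|\le\frac3{16}$ and $\sqrt{xy}\le\frac1{16}$), say $|B|\le M$, while the denominator is $\ge c:=\left(\tfrac{15}{16}\right)^2\tfrac{13}{16}>0$. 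Therefore $|p^*_+(x,y)|\le \frac{M}{c}\sqrt{xy}$ for $xy>0$, and since $\sqrt{xy}\to0$ as $(x,y)\to(x_0,y_0)$ this forces $p^*_+(x,y)\to 0$, matching the assigned value; thus $p^*_+$ is jointly continuous on all of $\left[0,\frac1{16}\right]^2$.

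It remains to bound $p^*_+$. Because $\sqrt{xy}\ge0$ and the denominator is positive, the sign of $p^*_+$ is that of $B$, and here the normalization $C=\frac1{16}$ does the work: since $\sqrt x,\sqrt y\le\frac14$ we have $(1-\sqrt x)(1-\sqrt y)\ge\left(\tfrac34\right)^2=\frac9{16}$, which dominates $|f_1|+\sqrt{xy}\,|f_2|\le\frac3{16}+\frac3{256}$, so $B\ge\frac9{16}-\frac3{16}-\frac3{256}=\frac{93}{256}>0$ and hence $p^*_+\ge0$ (with equality exactly on $xy=0$). For the upper bound, the crude estimates $\sqrt{xy}\le\frac1{16}$, $B\le 1+\frac3{16}+\frac3{256}$, and denominator $\ge c$ give a value of order $0.1$, well below $\frac12$, so $\frac12$ is a comfortable bound. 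The main---though still mild---obstacle is the boundary continuity, and the structural observation that makes everything go through is that the leading term $(1-\sqrt x)(1-\sqrt y)$ stays close to $1$ while every correction ($s$, $f_1$, $f_2$, $\sqrt x$, $\sqrt y$) is forced small by $C=\frac1{16}$, so no cancellation can push $B$ negative or $p^*_+$ near $\frac12$.
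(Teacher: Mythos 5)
Your proof is correct and follows essentially the same route as the paper's: invoke Lemma \ref{basic} to get $|f_1|,|f_2|\le\frac{3}{16}$, hence a positive denominator and continuity on $\{xy>0\}$, then use the explicit $\sqrt{xy}$ factor together with the numeric bounds on the bracket to obtain both the containment in $\left[0,\frac{1}{2}\right]$ and continuity at points with $xy=0$. Your treatment is slightly more explicit than the paper's (notably in checking that $f_1,f_2$ extend to the closed edges $x=\frac{1}{16}$, $y=\frac{1}{16}$, a point the paper glosses over), but the argument is the same.
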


\begin{proof}
Lemma \ref{basic} implies that the denominator is positive when $xy>0$, hence $p^*_+$ is well defined. The same lemma also implies that $p^*_+$ is jointly continuous on $]0,\frac{1}{16}]^2$. Finally, the bounds on $f_1$ and $f_2$ and the fact that $x$ and $y$ are less than $\frac{1}{16}$ imply that
\[
\sqrt{xy} \frac{9/16-3/16-3/256}{19/16}\leq p^*_+(x,y)\leq \sqrt{xy} \frac{1+3/16+3/256}{(15/16)^2\times13/16}=\frac{4912}{2 925}\sqrt{xy} <\frac{1}{2}
\]
hence $p^*_+$ is also jointly continuous at any $(x,y)$ with $xy=0$, and takes its value in $\left[0,\frac{1}{2}\right]$.
\end{proof}

\begin{lem}
Let $s\in {C}^1(]0,\frac{1}{16}],\mathds{R})$. Assume that $s$ and $x\rightarrow x s'(x)$ are both bounded by $\frac{1}{16}$. Then
the function defined on $\left[0,\frac{1}{16}\right]^2$ by

\[
p^*_-(x,y)=\begin{cases}
\frac{\sqrt{xy}\left[(1-\sqrt{x})(1-\sqrt{y})+f_1(x,y)-\sqrt{xy}f_2(x,y)\right]}{(1-x)(1-y)(1-f_2(x,y))} & \text{if } xy>0\\
 0   & \text{if } xy=0
\end{cases}
\]
is well defined, jointly continuous, with value in $\left[0,\frac{1}{2}\right]$.
\end{lem}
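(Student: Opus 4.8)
The plan is to follow the proof of Lemma~\ref{contpstarplus} almost verbatim, exploiting the near-perfect symmetry between $p^*_+$ and $p^*_-$: the only structural changes are that the denominator now carries the factor $1-f_2$ instead of $1+f_2$, and that the $f_1$ and $f_2$ terms in the numerator bracket appear with reversed signs. Applying Lemma~\ref{basic} with $C=\frac{1}{16}$ gives $|f_1|\leq 3/16$ and $|f_2|\leq 3/16$, so the sign flip in the denominator is harmless: $1-f_2\geq 1-3/16=13/16>0$, exactly as $1+f_2\geq 13/16$ in the previous lemma.

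First I would check that $p^*_-$ is well defined on $]0,\frac{1}{16}]^2$. Since $x,y\leq \frac{1}{16}$, the factors $(1-x)$ and $(1-y)$ are at least $15/16$, and $1-f_2\geq 13/16$ by the bound above, so the denominator never vanishes; joint continuity on $]0,\frac{1}{16}]^2$ then follows from the joint continuity of $f_1$ and $f_2$ provided by Lemma~\ref{basic} together with the non-vanishing of the denominator.

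Next I would bound $p^*_-$ in order to treat the boundary $xy=0$ and to locate the range. Using $(1-\sqrt{x})(1-\sqrt{y})\in[9/16,1]$ (as $\sqrt{x},\sqrt{y}\leq 1/4$), $|f_1|\leq 3/16$, and $\sqrt{xy}\,|f_2|\leq \frac{1}{16}\cdot\frac{3}{16}=3/256$, the numerator bracket lies in $[\,9/16-3/16-3/256,\ 1+3/16+3/256\,]$, the same interval as in Lemma~\ref{contpstarplus}; and the denominator lies in $[\,(15/16)^2\times 13/16,\ 19/16\,]$. Combining these yields
\[
\sqrt{xy}\,\frac{9/16-3/16-3/256}{19/16}\leq p^*_-(x,y)\leq \frac{4912}{2925}\sqrt{xy}<\frac{1}{2}.
\]
Since the lower bound is nonnegative and both bounds tend to $0$ as $\sqrt{xy}\to 0$, a squeeze shows $p^*_-(x,y)\to 0$ at every boundary point with $xy=0$, matching the prescribed value; hence $p^*_-$ is jointly continuous on all of $[0,\frac{1}{16}]^2$ and takes values in $[0,\frac{1}{2}]$.

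There is no genuine obstacle beyond bookkeeping. The one point deserving care is that, in extracting the upper bound, the value of $f_2$ maximizing the numerator ($f_2=-3/16$) is opposite to the one minimizing the denominator ($f_2=3/16$), so these extreme cases cannot occur simultaneously; this merely makes the constant $\frac{4912}{2925}$ non-optimal, which is immaterial since we only need the bound to be $<\frac{1}{2}$. The symmetry with $p^*_+$ is exactly what produces the identical numerical constants.
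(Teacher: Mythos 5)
Your proof is correct and is essentially the paper's own argument: the paper disposes of this lemma in one line (``Same as the previous proof, replacing $s$ by its opposite''), which works because $f_1$ and $f_2$ are odd in $s$, so $p^*_-$ built from $s$ coincides with $p^*_+$ built from $-s$, and $-s$ satisfies the same hypotheses. Your write-up simply unpacks the identical estimates (same constants $9/16-3/16-3/256$, $19/16$, and $\frac{4912}{2925}$) that this substitution inherits from Lemma~\ref{contpstarplus}, including the correct observation that bounding numerator and denominator separately is valid since both are positive.
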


\begin{proof}
Same as the previous proof, replacing $s$ by its opposite.
\end{proof}

\begin{lem}
Let $s\in {C}^1(]0,\frac{1}{16}],\mathds{R})$. Assume that $s$ and $x\rightarrow x s'(x)$ are both bounded by $\frac{1}{16}$. Then
the function defined on $\left[0,\frac{1}{16}\right]^2$ by

\[
p_+(x,y)=\begin{cases}
\frac{(\sqrt{x}+\sqrt{y})(1-\sqrt{x}-s(x))(1-\sqrt{y}-s(y))}{2(1-x)(1-y)(1+f_2(x,y))} & \text{if } xy>0\\
\frac{\sqrt{x}(1-\sqrt{x}-s(x))}{2(1-x)} & \text{if } x>0 \text{ and } y=0\\
\frac{\sqrt{y}(1-\sqrt{y}-s(y))}{2(1-y)} & \text{if } y>0 \text{ and } x=0\\
 0   & \text{if } x=y=0
\end{cases}
\]
is well defined, jointly continuous, with value in $\left[0,\frac{1}{2}\right]$.
\end{lem}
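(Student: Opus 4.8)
The plan is to mirror the proof of Lemma~\ref{contpstarplus}, splitting $\left[0,\frac{1}{16}\right]^2$ into the interior $\{xy>0\}$, the two edges $\{x>0,\,y=0\}$ and $\{x=0,\,y>0\}$, and the corner $(0,0)$, and verifying well-definedness, joint continuity and the bound on each piece. On the interior I would argue exactly as before: by Lemma~\ref{basic} (applied with $C=\frac{1}{16}$) the function $f_2$ is continuous and satisfies $|f_2|\le\frac{3}{16}$, so that $1+f_2(x,y)\ge\frac{13}{16}>0$, while $(1-x)(1-y)\ge\left(\frac{15}{16}\right)^2>0$; hence the denominator is bounded away from $0$ and $p_+$ is a well-defined, jointly continuous quotient of continuous functions on $\{xy>0\}$.

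The main obstacle is continuity across the edge $\{x>0,\,y=0\}$ (the edge $\{x=0,\,y>0\}$ being symmetric), since $s(y)$ need not have a limit as $y\to0$, so a factor-by-factor passage to the limit is impossible. The resolution is an algebraic cancellation. Writing
\[
1+f_2(x,y)=\frac{\sqrt{x}\,(1-s(y))-\sqrt{y}\,(1-s(x))}{\sqrt{x}-\sqrt{y}},
\]
I would recast the problematic factor as
\[
\frac{1-\sqrt{y}-s(y)}{1+f_2(x,y)}=\frac{\bigl(1-\sqrt{y}-s(y)\bigr)\bigl(\sqrt{x}-\sqrt{y}\bigr)}{\sqrt{x}\,(1-s(y))-\sqrt{y}\,(1-s(x))}.
\]
Since $|s|\le\frac{1}{16}$, the quantity $1-s(y)$ stays in $\left[\frac{15}{16},\frac{17}{16}\right]$, bounded away from $0$; a short estimate shows that the numerator minus the denominator of this last fraction equals $\sqrt{y}\,\bigl(s(y)-s(x)+\sqrt{y}-\sqrt{x}\bigr)=O(\sqrt{y})$, while the denominator tends to $\sqrt{x}\,(1-s(y))$. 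Hence the ratio tends to $1$, uniformly for $x$ in compact subsets of $\left]0,\frac{1}{16}\right]$, so that $p_+(x,y)\to\frac{\sqrt{x}\,(1-\sqrt{x}-s(x))}{2(1-x)}$, which is exactly the prescribed edge value; this upgrades the edgewise limit to genuine joint continuity up to the edge.

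For the corner I would use the crude estimate that, on the interior, every factor except $\sqrt{x}+\sqrt{y}$ is bounded (the numerator factors by $\frac{17}{16}$ and the denominator from below by a positive constant via Lemma~\ref{basic}), so that $|p_+(x,y)|\le C'\left(\sqrt{x}+\sqrt{y}\right)\to0$; combined with the obvious fact that the one-variable edge formulas also tend to $0$, this yields continuity at $(0,0)$ with value $0$. Finally, for the range: nonnegativity is immediate since every factor is nonnegative, using $1-\sqrt{x}-s(x)\ge1-\frac14-\frac{1}{16}=\frac{11}{16}>0$ and $1+f_2\ge\frac{13}{16}>0$; and the upper bound follows from $\sqrt{x}+\sqrt{y}\le\frac12$, $1-\sqrt{x}-s(x)\le\frac{17}{16}$, $(1-x)(1-y)\ge\left(\frac{15}{16}\right)^2$ and $1+f_2\ge\frac{13}{16}$, which bound the quotient strictly below $\frac12$; the edge and corner values trivially satisfy the same bounds.
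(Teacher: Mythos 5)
Your proposal is correct and takes essentially the same route as the paper: the same decomposition of the square, Lemma~\ref{basic} for well-definedness and continuity on $\{xy>0\}$, the same off-diagonal identity for $1+f_2$ to cancel the oscillating factor $1-\sqrt{y}-s(y)$ at the edges, and the same sandwich estimate $0\le p_+(x,y)\le C\,(\sqrt{x}+\sqrt{y})$ for continuity at the corner and the bound by $\frac{1}{2}$. The only difference is one of packaging: the paper turns the cancellation into the closed-form rewriting $p_+(x,y)=\frac{x-y}{2(1-x)(1-y)\left[\frac{\sqrt{x}}{1-\sqrt{x}-s(x)}-\frac{\sqrt{y}}{1-\sqrt{y}-s(y)}\right]}$, which is continuous off the corner at a glance, whereas you prove that $\frac{1-\sqrt{y}-s(y)}{1+f_2(x,y)}\to 1$ uniformly for $x$ in compacta of $\left]0,\frac{1}{16}\right]$ — an equivalent estimate resting on the identical algebraic identity.
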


\begin{proof}
Lemma \ref{basic} implies that the denominator is positive when $xy>0$, hence $p_+$ is well defined. The same lemma also implies that $p_+$ is jointly continuous on $]0,\frac{1}{16}]^2$.

Remarking that for $x\neq y$ , $1+f_2(x,y)=\frac{1}{\sqrt{x}-\sqrt{y}} (\sqrt{x}(1-\sqrt{y}-s(y))-\sqrt{y}(1-\sqrt{x}-s(x)))$ one gets

\[
p_+(x,y)=\frac{x-y}{2(1-x)(1-y)\left[ \frac{\sqrt{x}} {1-\sqrt{x}-s(x)}-\frac{\sqrt{y}} {1-\sqrt{y}-s(y)}   \right]}
\]

hence the joint continuity of $p_+$ at any point where $xy=0$ and $x+y>0$.

Finally, the bounds on $f_1$ and $f_2$ and the fact that $x$ and $y$ are less than $\frac{1}{16}$ imply that
\[
(\sqrt{x}+\sqrt{y}) \frac{(11/16)^2}{2\times19/16}\leq p_+(x,y)\leq (\sqrt{x}+\sqrt{y}) \frac{(17/16)^2}{2\times(15/16)^2\times13/16}=\frac{2312}{2 925} (\sqrt{x}+\sqrt{y})<\frac{1}{2}
\]
hence $p_+$ is also jointly continuous at $(0,0)$, and takes its value in $\left[0,\frac{1}{2}\right]$.
\end{proof}

\begin{lem}\label{contpmoins}
Let $s\in {C}^1(]0,\frac{1}{16}],\mathds{R})$. Assume that $s$ and $x\rightarrow x s'(x)$ are both bounded by $\frac{1}{16}$. Then
the function defined on $\left[0,\frac{1}{16}\right]^2$ by

\[
p_-(x,y)=\begin{cases}
\frac{(\sqrt{x}+\sqrt{y})(1-\sqrt{x}+s(x))(1-\sqrt{y}+s(y))}{2(1-x)(1-y)(1-f_2(x,y))} & \text{if } xy>0\\
\frac{\sqrt{x}(1-\sqrt{x}+s(x))}{2(1-x)} & \text{if } x>0 \text{ and } y=0\\
\frac{\sqrt{y}(1-\sqrt{y}+s(y))}{2(1-y)} & \text{if } y>0 \text{ and } x=0\\
 0   & \text{if } x=y=0
\end{cases}
\]
is well defined, jointly continuous, with value in $\left[0,\frac{1}{2}\right]$.
\end{lem}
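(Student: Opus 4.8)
The plan is to reduce this lemma to the preceding one on $p_+$ via the substitution $s \mapsto -s$. First I would observe that the four case-defining expressions for $p_-$ are obtained from those for $p_+$ by replacing $s$ with $-s$ throughout: on the axes and at the origin this is immediate, while in the interior $1-\sqrt{x}-s(x)$ becomes $1-\sqrt{x}+s(x)$ and, crucially, the denominator factor $1+f_2$ must become $1-f_2$. This last point is the only computation to check, and it is a one-liner: from the definition in Lemma \ref{basic}, replacing $s$ by $-s$ sends $f_2(x,y)=\frac{\sqrt{y}s(x)-\sqrt{x}s(y)}{\sqrt{x}-\sqrt{y}}$ to its negative (and sends the diagonal value $2xs'(x)-s(x)$ to its opposite as well), so $1+f_2$ is turned precisely into $1-f_2$.

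Second, I would note that the hypotheses of the statement are invariant under $s \mapsto -s$, since the bounds $|s| \le \frac{1}{16}$ and $|x s'(x)| \le \frac{1}{16}$ are unaffected by a global sign change. Hence $-s$ satisfies exactly the assumptions of the previous lemma, and the function $p_-$ associated with $s$ coincides with the function $p_+$ associated with $-s$. Applying the previous lemma to $-s$ then yields at once that $p_-$ is well defined, jointly continuous on $\left[0,\frac{1}{16}\right]^2$, and takes values in $\left[0,\frac{1}{2}\right]$.

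There is essentially no obstacle here; the entire content is the sign computation for $f_2$ above. Should a self-contained argument be preferred, one can simply transcribe the three steps of the $p_+$ proof with $s$ replaced by $-s$: Lemma \ref{basic} gives $|f_2| \le 3\times\frac{1}{16} < 1$, so the denominator $2(1-x)(1-y)(1-f_2)$ stays positive on $\{xy>0\}$, giving well-definedness and joint continuity on the interior; the algebraic identity $p_-(x,y)=\frac{x-y}{2(1-x)(1-y)\left[\frac{\sqrt{x}}{1-\sqrt{x}+s(x)}-\frac{\sqrt{y}}{1-\sqrt{y}+s(y)}\right]}$ provides continuity on the axes away from the origin; and the same two-sided estimate, now proportional to $\sqrt{x}+\sqrt{y}$, delivers continuity at $(0,0)$ together with the range $\left[0,\frac{1}{2}\right]$. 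But the symmetry reduction renders all of this routine.
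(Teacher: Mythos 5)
Your proposal is correct and is exactly the paper's proof, which reads ``Same as the previous proof, replacing the function $s$ by its opposite''; you have additionally made explicit the one detail the paper leaves implicit, namely that $f_2$ changes sign under $s\mapsto -s$ (both off and on the diagonal), so that $1+f_2$ becomes $1-f_2$.
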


\begin{proof}
Same as the previous proof, replacing the function $s$ by its opposite.
\end{proof}

\begin{proof}[Proof of Proposition \ref{mainprop}]
It is an immediate consequence of Lemma \ref{lemmedefp}, since by the four preceding lemmas the functions defined by equations (\ref{eqdef1}) to (\ref{eqdef4}) have all the required properties.
\end{proof}


\subsection{The case of finitely repeated stochastic game.}
In this section we construct examples where $v_n$ does not converge as $n$ goes to infinity. The idea is to construct an example in which $v_\lambda$ does not converge and such that the sequence $v_n$ has the same asymptotic behavior as $v_\lambda$. The following lemma is a slight variation of a result of Neyman \cite{Ne}.

\begin{lem}\label{lemvlambdavn}
Let $\Gamma$ be any stochastic game. Assume that $v_\lambda$ is of class $\mathcal{C}^1$, and that for all $\omega$, $\frac{\mathrm{d} v_\lambda(\omega)}{\mathrm{d}\lambda}=o(\frac{1}{\lambda})$. Then $v_n$ and $v_\lambda$ have the same accumulation points.
\end{lem}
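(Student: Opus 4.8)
The plan is to prove the stronger quantitative statement that $\|v_n-v_{1/n}\|_\infty\to 0$ as $n\to\infty$, and then to transfer accumulation points between the sequence $(v_n)_n$ and the family $(v_\lambda)_{\lambda\to 0}$ using the regularity of $v_\lambda$. The heart of the argument is a comparison, along a single play of the $n$-stage game, between the uniform average of the stage payoffs and a well-chosen schedule of discount factors. Concretely, I would assign to stage $t$ (for $1\le t\le n$) the discount factor $\lambda_t=\frac{1}{n-t+1}$, chosen so that $\frac{1}{\lambda_t}=n-t+1$ and $\frac{1-\lambda_t}{\lambda_t}=n-t$, and have Player~1 play at stage $t$, in the current state $\omega_t$, an optimal stationary action of $\Gamma_{\lambda_t}(\omega_t)$. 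By the $\max\min$ form of the Shapley equation this action guarantees, in conditional expectation given the past,
\[
\lambda_t\,\E[g_t]+(1-\lambda_t)\,\E[v_{\lambda_t}(\omega_{t+1})]\ge \E[v_{\lambda_t}(\omega_t)],
\]
equivalently $\E[g_t]\ge (n-t+1)\,\E\,v_{\lambda_t}(\omega_t)-(n-t)\,\E\,v_{\lambda_t}(\omega_{t+1})$.

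Summing this over $t$ and writing $b_t:=(n-t+1)\,\E\,v_{\lambda_t}(\omega_t)$, the main part telescopes: the negative term at stage $t$ would equal $b_{t+1}$ were it not carrying $v_{\lambda_t}(\omega_{t+1})$ instead of $v_{\lambda_{t+1}}(\omega_{t+1})$. Hence, since $b_1=n\,v_{1/n}(\omega_1)$ and $b_{n+1}=0$,
\[
\E\sum_{t=1}^n g_t\ \ge\ n\,v_{1/n}(\omega_1)-\sum_{t=1}^n (n-t)\,\E\big[v_{\lambda_t}(\omega_{t+1})-v_{\lambda_{t+1}}(\omega_{t+1})\big].
\]
The hypothesis now controls the remainder term by term: using $\lambda_{t+1}-\lambda_t=\lambda_t\lambda_{t+1}$ and $(n-t)\lambda_{t+1}=1$, each summand is bounded by $\lambda_t\sup_{[\lambda_t,\lambda_{t+1}]}|v_\lambda'|\le \sup_{[\lambda_t,\lambda_{t+1}]}\lambda|v_\lambda'|$. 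Since the state space is finite, $\lambda|v_\lambda'|$ is bounded by some $M$ and tends to $0$ as $\lambda\to 0$ uniformly over states; as only the last $O(1)$ stages have $\lambda_t$ bounded away from $0$, dividing by $n$ shows that the remainder $R_n$ satisfies $R_n\to 0$ uniformly in the initial state, so that $v_n\ge v_{1/n}-R_n$.

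The symmetric argument, with Player~2 playing optimal stationary actions of $\Gamma_{\lambda_t}$, gives $v_n\le v_{1/n}+R_n'$ with $R_n'\to 0$, whence $\|v_n-v_{1/n}\|_\infty\to 0$. For the final transfer of accumulation points, note that $|v_\lambda'|\le M/\lambda$ makes the oscillation of $v_\lambda$ over $[1/(n+1),1/n]$ at most $M\ln(1+1/n)\to 0$; combined with $\|v_n-v_{1/n}\|\to 0$, this yields that $(v_n)_n$ and $(v_\lambda)_{\lambda\to0}$ share the same accumulation points. Indeed, if $\lambda_k\to 0$ and $v_{\lambda_k}\to v^\ast$, then taking $n_k=\lfloor 1/\lambda_k\rfloor$ gives $\|v_{n_k}-v_{\lambda_k}\|\to 0$ and hence $v_{n_k}\to v^\ast$; the converse direction is identical.

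The main obstacle is the bookkeeping in the error estimate: the whole scheme works because the schedule $\lambda_t=1/(n-t+1)$ makes the bulk of the sum telescope exactly, leaving a residual whose size is dictated precisely by $\lambda|v_\lambda'|$, the quantity that the hypothesis $v_\lambda'=o(1/\lambda)$ forces to vanish. One must check carefully that the $O(1)$ final stages (where $\lambda_t$ is close to $1$ and $\lambda|v_\lambda'|$ need not be small) contribute only $O(1/n)$ after division by $n$, so that $R_n$ and $R_n'$ genuinely tend to $0$.
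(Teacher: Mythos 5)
Your proof is correct, and at the top level it follows the same two-step scheme as the paper --- first establish the quantitative estimate $\|v_n - v_{1/n}\|_\infty \to 0$, then use the mean value theorem to control the oscillation of $v_\lambda$ over $[1/(n+1),1/n]$ and transfer accumulation points --- but the way you obtain the first step is genuinely different. The paper does not prove that estimate from scratch: it invokes Theorem 4 of Neyman \cite{Ne}, namely the inequality $\|w_n - v_n\|_\infty \le \frac{1}{n}\sum_{i=1}^{n-1} i \|w_{i+1}-w_i\|_\infty$ with $w_i = v_{1/i}$, and then notes that by the mean value theorem $i\|w_{i+1}-w_i\|_\infty \le \frac{1}{i+1}\sup_{\lambda\in[1/(i+1),1/i]} \left\|\frac{\mathrm{d} v_\lambda}{\mathrm{d}\lambda}\right\|_\infty = o(1)$, so the Ces\`aro average vanishes. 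Your backward discount schedule $\lambda_t = 1/(n-t+1)$, combined with the telescoping of $b_t=(n-t+1)\,\mathsf{E}\,v_{\lambda_t}(\omega_t)$, is in substance a self-contained strategic proof of exactly the estimate the paper outsources: the residual you isolate, $\sup_{[\lambda_t,\lambda_{t+1}]}\lambda\left|\frac{\mathrm{d} v_\lambda}{\mathrm{d}\lambda}\right|$, is term by term the same $o(1)$ quantity appearing in the paper's display, and your ``last $O(1)$ stages'' bookkeeping plays the role of the Ces\`aro argument (Neyman's inequality is itself proved by an induction on the nonexpansive Shapley operator, of which your play-by-play argument is the strategic counterpart). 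What your route buys is independence from that black box and a transparent view of why $\lambda \frac{\mathrm{d} v_\lambda}{\mathrm{d}\lambda} \to 0$ is precisely the needed hypothesis; what the paper's route buys is brevity, and reliance on an inequality that requires no differentiability of $\lambda\mapsto v_\lambda$ at all. Two points you should make explicit to be airtight: (i) at $t=n$ the coefficient $n-t$ vanishes, which is the only reason $\lambda_{n+1}$ never needs to be defined and the convention $b_{n+1}=0$ is legitimate; (ii) your stage inequality uses exactly optimal actions in the Shapley equation, which exist in the paper's compact framework (Sion's theorem plus compactness and continuity); for a more general game one would use $\varepsilon\lambda_t$-optimal actions instead, which after division by $\lambda_t$ and normalization by $n$ costs only an extra $\varepsilon$ and changes nothing in the conclusion.
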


\begin{proof}
Denote $w_n=v_\lambda$ for $\lambda=\frac{1}{n}$. By assumptions and the mean value theorem, \[\sup_{\mu\in[\frac{1}{n},\frac{1}{n-1}]}\|w_n-v_\mu\|_\infty=o(\frac{1}{n}).\] Hence $v_\lambda$ and $w_n$ have the same accumulation points.

By a argument due to Neyman(Theorem 4 in \cite{Ne}),
\[
\|w_n-v_n\|_\infty \leq \frac{1}{n}\sum_{i=1}^{n-1} i\|w_{i+1}-w_i\|_\infty.
\]

So we only need to observe that, by the mean value theorem,
\begin{eqnarray*}
i\|w_{i+1}-w_i\|_\infty&=&i\|v_{\frac{1}{i+1}}-v_{\frac{1}{i}}\|_\infty\\
&\leq&\frac{1}{i+1} \sup_{\lambda\in[1/i,1/i+1]} \left\|\frac{\mathrm{d} v_\lambda}{\mathrm{d}\lambda}\right\|_\infty\\
&=&o(1).
\end{eqnarray*}
 \end{proof}

We can now prove Theorem \ref{maintheorem}.

\begin{proof}[Proof of Theorem \ref{maintheorem}]
Let\footnote{The function $\frac{\sin{\ln x}}{16}$ used previously would not work here since its derivative is not a $o(1/x)$.}   $s(x)=\frac{\sin\ln (-\ln x)}{16}$ and $d(x)=\sqrt{x}$. Since $s'(x)=-\frac{\cos\ln (-\ln x)}{16x\ln x}$, $xs'(x)$ is bounded by $\frac{1}{64\ln(2)}<\frac{1}{16}$, and goes to 0 as $x$ goes to 0. By Proposition \ref{mainprop}, $(s,d)$ is feasible but $v_\lambda$ does not converge as $\lambda$ goes to 0. By Lemma \ref{lemvlambdavn}, $v_n$ does not converge as $n$ goes to infinity.
\end{proof}

 \begin{remark}
In fact, we could prove, exactly in the same way, that for the example constructed in the last proof, any admissible sequence $z_n$ (as defined in \cite{So}) also diverges as $n$ goes to infinity.
\end{remark}

\section{Concluding remarks and open problems}\label{sectionconclusion}

We first point out that these examples are minimal in several aspects:
\begin{itemize}
\item There are only two nonabsorbing states. Compact games with only one nonabsorbing state (called absorbing games) have an asymptotic \cite{RoSo} value.
\item In nonabsorbing states, the payoff does not depend on actions. If the payoff also did not depend on the current nonabsorbing state, the game would be a compact recursive game and would have an asymptotic value \cite{So4}.
\item There are exactly two initial states $\omega$ (the two absorbing states) such that $v_n(\omega)$ converges. For every compact stochastic game, there are at least two initial states such that $v_n(\omega)$ converges \cite{KoNe, Ne}.
\item Also remark that the action sets are not general compact sets but rather real intervals, and that all discounted games $\Gamma_\lambda$ have values in pure strategies.
\end{itemize}

At the beginning of Section \ref{sectiontechnique} we gave necessary conditions for $(s,d)$ to be feasible. In fact, quite surprisingly, it turns out that those necessary conditions are almost sufficient. Explicitly, one can prove, using the techniques presented in Section \ref{sectiontechnique}, that:

\begin{proposition}
Let $s$ and $d$ be two continuously differentiable functions from $]0,\frac{1}{16}]$ to $\mathds{R}$. Assume that
\begin{itemize}
\item $s$ and $\lambda s'(\lambda)$ are bounded.
\item $d$ is nonnegative and there exists $\varepsilon>0$ such that\footnote{In particular, any function $\lambda^\alpha$ for $\alpha\in]0,1[$ satisfies this condition.} for all $\lambda\in]0,1]$,
\[
\varepsilon\leq \frac{\lambda d'(\lambda)}{d(\lambda)}\leq 1-\varepsilon.
\]
\end{itemize}
Then $(As, Bd)$ is feasible for any nonnegative constants $A$ and $B$ small enough.
\end{proposition}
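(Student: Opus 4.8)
The plan is to follow the three-step strategy of Section~\ref{sectiontechnique} verbatim, replacing $\sqrt{\phantom{x}}$ by the general $d$ and carrying the two constants $A,B$ along. By Lemma~\ref{lemmedefp} it suffices to show that, after the substitution $s\to As$ and $d\to Bd$ in the candidate transitions (\ref{eqdef1}) to (\ref{eqdef4}), the four resulting functions are well defined, extend jointly continuously to $\left[0,\frac{1}{16}\right]^2$, and take values in $\left[0,\frac{1}{2}\right]$. A direct computation shows that each bracketed denominator factors as $d(\lambda)(1-As(\mu))-d(\mu)(1-As(\lambda))=(d(\lambda)-d(\mu))(1+Af_2(\lambda,\mu))$ (and its $1-Af_2$ analogue), where now
\[
f_1(\lambda,\mu)=\frac{d(\lambda)s(\lambda)-d(\mu)s(\mu)}{d(\lambda)-d(\mu)},\qquad f_2(\lambda,\mu)=\frac{d(\mu)s(\lambda)-d(\lambda)s(\mu)}{d(\lambda)-d(\mu)}.
\]
The leading power of $B$ cancels in all four quotients, except that a factor $q(\lambda,\mu)=\frac{\lambda-\mu}{d(\lambda)-d(\mu)}$ (divided by $B$) survives inside $p_+$ and $p_-$. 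Thus the whole argument reduces to controlling $f_1$, $f_2$ and $q$.

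First I would prove the analogue of Lemma~\ref{basic}: that $f_1$ and $f_2$ are jointly continuous and bounded by $\sup|s|+\varepsilon^{-1}\sup|x s'(x)|$. Joint continuity follows from the mean value theorem exactly as in that lemma. For the bound I would, for $\mu<\lambda$, write $f_1$ as the ratio of integrals $\left(\int_\mu^\lambda (ds)'\,dz\right)\big/\left(\int_\mu^\lambda d'\,dz\right)$ and estimate the numerator through $|(ds)'|\le|s|\,d'+|d s'|$, using the left inequality $\varepsilon\le\lambda d'(\lambda)/d(\lambda)$ in the form $d(z)/z\le d'(z)/\varepsilon$ to get $|d(z)s'(z)|=|zs'(z)|\,d(z)/z\le\varepsilon^{-1}\sup|xs'(x)|\,d'(z)$. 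The same device, applied to $(s/d)'$ and to the identity $\int_\mu^\lambda d'/d^2=(d(\lambda)-d(\mu))/(d(\lambda)d(\mu))$, bounds $f_2$. Under $s\to As$ these bounds acquire a factor $A$, so for $A$ small the denominators $1\pm Af_2$ stay bounded away from $0$, just as $1\pm f_2$ did for $\sqrt{\phantom{x}}$.

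Next I would analyse $q$, and it is here that the full two-sided hypothesis on $\lambda d'/d$ is needed. Integrating $\varepsilon/\lambda\le(\ln d)'\le(1-\varepsilon)/\lambda$ shows that $d$ is increasing with $d(0^+)=0$ and that $c\,\lambda^{1-\varepsilon}\le d(\lambda)\le C\,\lambda^{\varepsilon}$ near $0$; consequently $q(\lambda,0)=\lambda/d(\lambda)\le c^{-1}\lambda^{\varepsilon}\to0$, while writing $q=1/d'(\xi)$ with $d'(\xi)\ge\varepsilon d(\xi)/\xi\to\infty$ gives $q\to0$ at the corner $(0,0)$. This is the precise substitute for the elementary identity $\frac{\lambda-\mu}{\sqrt\lambda-\sqrt\mu}=\sqrt\lambda+\sqrt\mu$, and it yields the joint continuity of $p_+$ and $p_-$ at $(0,0)$; the continuity along the axes is obtained, as in the proofs of Lemmas~\ref{contpstarplus} to \ref{contpmoins}, by the cancellation of the (possibly undefined) limit $s(0^+)$ once $d(0)=0$ is inserted. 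With continuity in hand, nonnegativity follows from the positivity of $1-As-Bd$ and of $q$, and the four transition functions are then bounded by inserting the estimates on $f_1,f_2,q,\sup|s|,\sup|d|$ into the explicit formulas.

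I expect the two delicate points, where the range of admissible $A,B$ (hence the ``small enough'' of the statement) is actually pinned down, to be the following. On the one hand, $p^*_+$ and $p^*_-$ could, in the $d=\sqrt{\phantom{x}}$ case, be factored through $\sqrt{\lambda\mu}=d(\lambda)d(\mu)$ precisely because $\lambda=d(\lambda)^2$; this polynomial identity is lost for a general $d$, so the numerator $\lambda(1-\mu)d(\mu)P(\lambda)-\mu(1-\lambda)d(\lambda)P(\mu)$ (with $P=1-As-Bd$) must instead be shown directly to vanish to the correct order on the diagonal and at the corner relative to $d(\lambda)-d(\mu)$, which again rests on the interplay between $\lambda$ and $d(\lambda)$ measured by $\lambda d'/d$. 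On the other hand, because the factor $q/B$ survives in $p_+$ and $p_-$, the bound $p_\pm\le\frac12$ is the binding constraint: it couples $B$ to $\sup q$ (finite thanks to the bounds on $d$), while positivity of $1-As-Bd$ caps the constants from above. Verifying that these requirements leave a nonempty admissible set of $(A,B)$ is the heart of the proof, and I would treat it last.
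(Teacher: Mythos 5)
First, a point of reference: the paper never writes a proof of this proposition — it appears in the concluding section with only the indication that it follows from ``the techniques presented in Section~\ref{sectiontechnique}'' — so your attempt can only be measured against that technique. The parts of your plan that you actually carry out are correct and are the right generalizations: representing $f_1=\bigl(\int_\mu^\lambda (ds)'\bigr)\big/\bigl(\int_\mu^\lambda d'\bigr)$ and $f_2=\bigl(\int_\mu^\lambda (s/d)'\bigr)\big/\bigl(\int_\mu^\lambda d'/d^2\bigr)$ and using $d(z)/z\le d'(z)/\varepsilon$ is exactly the substitute for Lemma~\ref{basic}, and your asymptotics for $q(\lambda,\mu)=(\lambda-\mu)/(d(\lambda)-d(\mu))$ (bounded, vanishing at the corner) are right.

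The genuine gap is the step you postpone to the end (``verifying that these requirements leave a nonempty admissible set of $(A,B)$''): it is not a verification to be done last, it is the point where the statement as you set out to prove it breaks down, and your own computation already contains the obstruction. Because the factor $q/B$ survives in $p_+$, the forced transition probability blows up like $1/B$ at points bounded away from the origin, so the admissible set of $(A,B)$ \emph{excludes} all small $B$. One can see this without any machinery: for a game in $\mathcal{G}$ with $v_\lambda(\omega_\pm)=As(\lambda)\pm Bd(\lambda)$, equation (\ref{eq1}) rearranges to
\[
2(1-\lambda)\,Bd(\lambda)\,p_+(\lambda,j)=\bigl(1-As(\lambda)-Bd(\lambda)\bigr)\bigl[\lambda+(1-\lambda)p^*_+(\lambda,j)\bigr],
\]
and since $p^*_+\ge 0$ and $p_+\le 1$, this forces $2(1-\lambda)Bd(\lambda)\ge \lambda\bigl(1-As(\lambda)-Bd(\lambda)\bigr)$ for every $\lambda\in\left]0,\frac{1}{16}\right]$. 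Taking $\lambda=\frac{1}{16}$, as soon as $A\sup|s|+B\sup d\le\frac{1}{2}$ this gives $Bd(\frac{1}{16})\ge\frac{1}{60}$: a strictly positive \emph{lower} bound on $B$, depending only on $d$ (and $(0,0)$ itself is infeasible, since (\ref{eq1}) would read $0=\lambda+(1-\lambda)p^*_+>0$). So the constraint you describe as ``coupling $B$ to $\sup q$'' is a lower bound on $B$, incompatible with the phrase ``for any nonnegative constants $A$ and $B$ small enough''; no completion of your plan, nor any other argument, can prove that literal statement. What the method does prove — and how the proposition has to be reinterpreted — is feasibility for $A$ small enough and $B$ in a window bounded away from $0$ and from above (equivalently, after renormalizing $d$ so that $\inf d'$ is of order $1$). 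Pinning down that window, together with the sign and size analysis of $p^*_\pm$ that you rightly flag as delicate (for $d=\lambda^\alpha$ with $\alpha$ near $0$ or $1$ the window becomes very narrow), is the actual mathematical content here, and it is absent from the proposal.
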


Let us now briefly discuss the regularity of the transitions functions in our counterexamples. One may remark that while these functions are constructed to be continuous, they are not continuously differentiable in 0, nor even Lipschitz-continuous. However, we affirm that this lack of regularity is not at all the reason of the divergence of $v_\lambda$. Indeed, for any nonnegative $r$, replacing $x$ and $y$ by $x^r$ and $y^r$ in the definition of the transition functions will not change the values (it is just a relabeling of actions) while it will regularize the transition functions. To say it another way, we only considered games where the pure action $\lambda$ is optimal in $\Gamma_\lambda$, but this is just to make calculations easier, and relaxing this assumption we can construct transitions as regular as one wants.

Rather than the regularity of the transition functions, we argue that the issue here is their infinite number of oscillations. Recall that in our counterexample, $p_+(i,j)$ is of the order of $\sqrt{i}+\sqrt{j}$, and $p_+^*(i,j)$ is of the order of $\sqrt{ij}$. Thus in $\Gamma_\lambda$, starting from $\omega_+$, Player 2 should play neither a high $j$ (otherwise Player 1 may absorb with payoff 1 and high probability) nor a low $j$ (otherwise Player 1 may stay in $\omega_+$ with payoff 1 until the game is essentially finished). Hence he should play the intermediate action $j=\lambda$, and the same thing is true in $\omega_-$ and for Player 1. So, under optimal play, the order of magnitude of the time between two transitions from a nonabsorbing state to the other is $\lambda^{-\frac{1}{2}}$. Hence, after the first $\lambda^{-\frac{2}{3}}$ stages (during which the accumulated discounted payoff has been negligible), there still has been no absorption with probability almost 1, and the occupation measure has almost reached the invariant measure $\frac{p_-(\lambda,\lambda)}{p_-(\lambda,\lambda) + p_+(\lambda,\lambda)}\cdot \omega_+ + \frac{p_+(\lambda,\lambda)}{p_-(\lambda,\lambda) + p_+(\lambda,\lambda)}\cdot \omega_-$. We have thus established that the discounted payoff given nonabsorption is approximately $\frac{p_-(\lambda,\lambda) - p_+(\lambda,\lambda)}{p_-(\lambda,\lambda) + p_+(\lambda,\lambda)}$ ; similarly one sees that the expected payoff after absorption is function of the relative importance of $p_+,p_-,p_+^*$ and $p_-^*$. Oscillations of the ratios between these quantities thus imply oscillations of the discounted payoff under optimal play, which is $v_\lambda$. For a compact game that is the mixed extension (up to relabelling of actions) of some finite games, these quantities cannot oscillate infinitely often, and we think this is the reason why the values converge in the finite framework. Since semi-algebraic functions cannot oscillate infinitely often, the following natural question is rather intriguing:

\begin{itemize}
\item[(i)] Let $\Gamma$ be a compact game with semi-algebraic payoff and transition functions. Is there an asymptotic value ?
\end{itemize}

When there is only one player, while there may be no 0-optimal play in the infinite game \cite{Ba1}, the asymptotic value always exists for compact games \cite{Ba3,DyYu}. In fact it exists with no hypotheses at all on the action set as long as the number of states is finite \cite{Ren3}. When there are two players, the asymptotic value exists for games with finitely many actions for each player \cite{BeKo}, but we showed that asymptotic value may not exist for compact games. This leads to this question in an intermediate setting:

\begin{itemize}
\item[(ii)] Let $\Gamma$ be a compact game in which Player 1 has a finite number of actions. Is there an asymptotic value ?
\end{itemize}

Finally, the asymptotic value exists for compact games in which Player 2 has no influence on the transition, and in fact even in a more general setting in which Player 2 is also not perfectly informed of the state \cite{Ren2}. In our construction it is important that the transitions are jointly controlled. From $\omega_+$ Player 1 cannot ensure to go to $1^*$ with positive probability, while Player 2 can force a transition to $\omega_-$ with high probability. However Player 2 cannot at the same time prevent any transition to $1^*$ and ensure a positive probability to go to $\omega_-$. Hence one may wonder:

\begin{itemize}
\item[(iii)] Let $\Gamma$ be a compact game in which each state is controlled by one player (but different states may be controlled by different players). Is there an asymptotic value ?
\end{itemize}

The answer to these three questions is not know in general, however the particular case of semi algebraic games which also satisfy either condition (ii) or (iii) is settled (with positive answer) in \cite{BoGaVi}.

A last remark is that there is a huge gap between compact games with one and two nonabsorbing states. We just showed that there is no asymptotic value for games with two nonabsorbing states ; while for one nonabsorbing states the stronger notion of uniform value (when the payoffs are observed) also holds \cite{MNR}. In fact it does not seem easy to construct a compact game with an asymptotic but no uniform value (when the payoffs are observed).

\section*{Acknowledgments}
This paper owes a lot to Sylvain Sorin. I pleasantly remember countless discussions about compact games and why they should have an asymptotic value or not, as well as devising with him a number of "almost proofs" of convergence. This was decisive to understand the right direction to go to stumble upon this counterexample.

I also would like to thank J\'erome Bolte for being the first to warn me about nonsemialgebraic functions, J\'er\^ome Renault for raising several interesting questions while I was writing this paper, as well as Andrzej S. Nowak for useful references.

\end{document}